\def\DATE{February 20, 2017}
\font\cyr=wncyi8
\newtheorem{theorem}{Theorem}
\newtheorem{corollary}[theorem]{Corollary}
\newtheorem{lemma}[theorem]{Lemma}
\newtheorem{proposition}[theorem]{Proposition}
\theoremstyle{definition}
\newtheorem{example}[theorem]{Example}
\newtheorem{remark}[theorem]{Remark}
\newtheorem{definition}[theorem]{Definition}
\DeclareMathOperator{\Spec}{Spec}
\DeclareMathOperator{\ad}{ad}
\DeclareMathOperator{\Ad}{Ad}
\DeclareMathOperator{\GL}{GL}
\DeclareMathOperator{\Sh}{Sh}
\def\Ass{{\EuScript A{\it ss\/}}} 
\def\Com{{\EuScript C{\it om\/}}}
\def\Lie{{\EuScript L{\it ie\/}}}
\def\oP{{\EuScript P}}
\def\oQ{{\EuScript Q}}
\def\cS{{{}^cS}}
\def\cF{{{}^cF_{\oP}}}
\def\Fr{{F_{\oQ}}}
\def\unsh{\Sh^{-1}}
\def\ring{{T}}
\def\frn{{\mathfrak n}}
\def\gl{\mathfrak{gl}}
\def\dred{{\overline \delta'}}
\def\bardelta{{\bar\delta}}
\def\MC{{\rm QME}}
\def\frm{{\mathfrak m}}
\def\ttIBL{{\tt IBL}}
\def\L{{\tt L}}
\def\Im{{\rm Im}}
\def\un{{\sf 1}}
\def\sfe{{\sf e}}
\def\hot{{\widehat \otimes}}
\def\htt{{\widehat \times}}
\def\BV{{\rm BV}}
\def\IBL{{\rm IBL}}
\def\MV{{\rm MV}}
\def\ttMV{{\tt MV}}
\def\Ker{{\rm Ker}}
\def\bfk{{\mathbb k}}
\def\id{{\it id}}
\def\Lin{{\it Lin}}
\def\ot{\otimes}
\newcommand{\abs}[1]{{\lvert#1\rvert}}
\def\@evenfoot{\rule{0pt}{20pt}[\DATE] \hfill [{\tt \jobname.tex}]}
\def\@oddfoot{\rule{0pt}{20pt}{[\tt \jobname.tex}]\hfill [\DATE]}
\title{The MV formalism for ${\rm IBL}_\infty$- and ${\rm
    BV}_\infty$-algebras}
\author{Martin Markl and Alexander A. Voronov}
\dedicatory{{\cyr \cyracc
    \hfill    Odnazhdy chukcha prin\"es v redaktsiyu svo{\u i} roman. Redaktor proch\"el i govorit:\\
    \hfill    --- Ponimaete li, slabovato... Vam by klassiku pochitatp1.\\
    \hfill    Vy Turgeneva chitali? A Tolstogo? A Dostoevskogo?\hphantom{m}\\
    \hfill --- Odnako, net: chukcha --- ne chitatelp1, chukcha ---
    pisatelp1}.
  \smallskip\\
  \hfill \scriptsize{Russian folklore} \hphantom{mm}}
\address{Mathematical Institute of the Academy, {\v Z}itn{\'a} 25,
         115 67 Prague 1, The Czech Republic}
\address{Faculty of Mathematics and Physics, Charles University,
186 75 Sokolovsk\'a 83, Prague~8, The Czech Republic}
\email{markl@math.cas.cz}
\address{School of Mathematics, University of Minnesota, 206 Church
  St. S.E., Minneapolis, MN 55455, USA}
\address{Kavli IPMU (WPI), UTIAS, The University of Tokyo, Kashiwa,
  Chiba 277-8583, Japan}
\email{voronov@umn.edu}
\thanks{The first author was supported by the Eduard \v Cech Institute
  P201/12/G028 and RVO: 67985840. The second author was supported by
  the World Premier International Research Center Initiative (WPI
  Initiative), MEXT, Japan, and a Collaboration grant from the Simons
  Foundation (\#282349).}
\subjclass[2010]{08C05, 18G55 (Primary); 16E45, 58A50 (Secondary)} 
\begin{document}
\bibliographystyle{plain}
\baselineskip 16.25pt  plus 1.5pt minus .5pt

\begin{abstract}
  We develop a new formalism for the Quantum Master Equation
  $\Delta e^{S/\hbar} = 0$ and the category of $\IBL_\infty$-algebras
  and simplify some homotopical algebra arising in the context of
  oriented surfaces with boundary. We introduce and study a category
  of \MV-algebras, which, on the one hand, contains such important
  categories as those of $\IBL_\infty$-algebras and
  $\L_\infty$-algebras, and on the other hand, is homotopically
  trivial, in particular allowing for a simple solution of the quantum
  master equation. We also present geometric interpretation of our
  results.
\end{abstract}

\keywords{$\MV$-algebra, $\IBL_\infty$-algebra, Master Equation,
  Transfer} 

\maketitle

\tableofcontents

\section{Introduction}

Recent developments in String Topology, Symplectic Field Theory, and
Lagrangian Floer Theory have led to a new wave of homotopical algebra,
heavily burdened by formulas that seem overwhelming to the eye of an
unpretentious mathematician, see \cite{cl,cfl}. The algebra is
associated to a homotopy version of involutive Lie bialgebras, or
$\IBL_\infty$-algebras. The algebraic structure in question is
governed by surfaces with boundary. It arises from multilinear
operators that correspond to diffeomorphism classes of connected,
oriented surfaces of various genera and with various numbers of
labelled boundary components, separated into ``inputs'' and
``outputs,'' like in Topological Quantum Field Theory (TQFT). Since
such a diffeomorphism class is determined by the genus and the number
of inputs and outputs, the algebraic structure is determined by linear
operators: $l^g_{m,n}: S^m (U) \to S^n(U)$, $g \ge 0$, $m, n \ge 1$,
between symmetric powers of a graded vector space $U$. However, unlike
in TQFT, the correspondence is not supposed to be a functor:
$l^0_{1,1}: U \to U$ has to be a differential on $U$ rather than the
identity, and $l^g_{m,n}$ must be a null-homotopy for the sum of all
possible gluings of two surfaces along part of their inputs and
outputs resulting in a surface of genus $g$ with $m$ inputs and $n$
outputs, see \cite{dctt,terilla:quantizing,cfl}. These operations,
$l^g_{m,n}$ may be collected nicely into a generating function
$\Delta: S(U)[[\hbar]] \to S(U)[[\hbar]]$, called a BV (or
$\BV_\infty$) operator, which is in fact an odd differential operator
such that $\Delta^2 = 0$, see \cite{dctt,cfl,munster-sachs}. From the
operadic perspective, we are talking about the differential graded
(dg) dual notion to the notion of a TQFT, namely an algebra over the
dg dual properad to the Frobenius algebra properad, which is Koszul
dual to the involutive Lie bialgebra properad, see
\cite{dctt,cmw}. The operadic yoga applies and shows that the
algebraic structure is a canonical homotopy version of the structure
of an involutive Lie bialgebra, producing an (almost) canonical name:
that of an $\IBL_\infty$-algebra.

In concrete geometric contexts, the construction of an $\IBL_\infty$
structure involves a number of choices of geometric data, such as an
almost complex structure on the target space, a Riemannian metric on
the surface, \&c., and showing the independence of the algebraic
structure on the choices invokes the notion of an
$\IBL_\infty$-morphism. That notion is quite elaborate algebraically,
see \cite{cfl}, but may be packed nicely in a generating function,
involving the notion of an ``exponential'' of a linear map
$S(U') \to S(U'')$ between symmetric algebras. The exponential was
defined in \cite{cl,cfl} by an explicit formula, see \eqref{exp},
bearing a certain resemblance to the exponential power series, as well
as respecting the exponentials of elements in these symmetric
algebras, see \cite{sasha's}, Example~\ref{za_14_dni_Izrael} and
Theorem~\ref{BV_transfer}.

The current work came out of the authors' realization that the
exponential of a map was a true exponential in a (graded) commutative
algebra given by the convolution product, see Equations
\eqref{Psano_dole_v_hotelu} and \eqref{exp}. This led to a significant
simplification of dealings with the category of $\IBL_\infty$-algebras
as well as to a ``larger'' category of \MV-algebras, which we present
in this paper. We show that the category of \MV-algebras is equivalent
to a certain category of pointed complexes. Thus, the category of
\MV-algebras is rather trivial as a category, but this does not
prevent it from having room for such highly nontrivial subcategories
as those of $\L_\infty$- and $\IBL_\infty$-algebras. This categorical
triviality may be regarded as homotopy triviality, presenting itself
through the observation that the quantum master equation \eqref{QME}
in an \MV-algebra is just a cocycle condition. We also show that
$\IBL_\infty$-morphisms are closed under composition within the
category of \MV-algebras, which seems to be a nontrivial property,~cf.~\cite{cfl}.

The theme of the current article may also be viewed as the study of
the quantum master equation (QME)
\begin{equation}
\label{QME}
\Delta e^{S/\hbar} = 0,
\end{equation}
which originates in the BV formalism, see e.g.~\cite{losev}, and takes
place in a dg BV- or, more generally, (hereafter, commutative)
$\BV_\infty$-algebra. $\IBL_\infty$-algebras provide an important, but
not exhaustive class of examples of $\BV_\infty$-algebras: those for
which the underlying commutative algebra is free, see Examples
\ref{sec:mv-algebras-1} and \ref{Treti_den_pekla}. The QME may be
viewed as an $\hbar$-deformation of the Maurer-Cartan equation
$dS + \frac{1}{2} [S,S] = 0$, cf.\
Theorem~\ref{sec:quant-mast-equat}. Thus, the study of solutions of
the QME must be related to a quantization of deformation theory, cf.\
\cite{terilla:quantizing}, in which the question of homotopy
invariance of solutions with respect to weak equivalences would be one
of the primary problems. The homotopy category of dg BV- and
$\BV_\infty$-algebras should be one of the working notions of
quantized deformation theory. Cieliebak and Latschev \cite{cl} defined
the notion of a $\BV_\infty$-morphism, only when the source was free
as a commutative algebra, i.e.\ derived from an
$\IBL_\infty$-algebra. Such a mismatch of sources and targets prevents
$\BV_\infty$-algebras from forming a category. \MV-morphisms described
in this paper include such $\BV_\infty$-morphisms and, on the other
hand, form a category. Moreover, the QME makes perfect sense in the
more general context of \MV-algebras, see
Section~\ref{sec:master-equation-1}, which makes them arguably a
better candidate for providing a background for quantum deformation
theory. The main results of the paper in this direction are
Theorem~\ref{sec:master-equation-2}, which may be interpreted as a
representability theorem for the functor of solutions of the QME, see
Remark~\ref{representability}, and a description of the transfer of
solutions of QME, Theorem~\ref{BV_transfer}. We also present geometric
interpretation of our results.

\noindent 
{\bf Disclaimer.}
It may appear that our choice of terminology goes against the good old
mathematical tradition of being modestly egocentric. We should assure
the reader that, on the contrary, we have chosen to be modest about
the mathematical content of the paper and, at the same time, allowed
ourselves to be somewhat egocentric in such a minor, cosmetic issue as
terminology.

\noindent 
{\bf Conventions.}
The symbol $\bfk$ will denote a fixed field $\bfk$ of characteristic
zero and $\ot$ the tensor product over $\bfk$. We will denote by
$\id_X$ or simply by $\id$ when $X$ is understood, the identity
endomorphism of an object $X$ (vector space, algebra, \&c.).  We
will sometimes denote the product of elements $a$ and $b$ of an
algebra using the explicit name of the multiplication (typically
$\mu(a,b)$), sometimes, when the meaning is clear from the context, by
$a\cdot b$, or simply by $ab$.

\def\redukce#1{\vbox to .3em{\vss\hbox{#1}}} For a graded
$\bfk$-vector vector space $V$ and a complete local commutative ring
$R$ with the residue field $\bfk$ and the maximal ideal $\frm$ we
denote by $V \hot R$ the completed tensor product $\varprojlim_n V\!
\otimes\! R/V\! \ot\!\frm^n$. In the particular case when $R$ is the
formal power series ring $\bfk[[\hbar]]$ in $\hbar$ we abbreviate, as
usual, \redukce{$V \hot \,\bfk[[\hbar]]$} by $V[[\hbar]]$. We also
abbreviate $\bfk(\!(\hbar)\!) : = \bfk[[\hbar]][\hbar^{-1}]$ and
$V(\!(\hbar)\!) := V \hot\, \bfk(\!(\hbar)\!)$.  The degree of a
homogeneous element $v \in V$ is denoted by $\abs{v}$.

\def\nni{non-negative integer}
\def\rada#1#2#3{{#1}_{#2},\ldots,{#1}_{#3}}
Recall that, for graded indeterminates  $\rada u1n$ and a permutation $\sigma\in
\Sigma_n$, the {\em Koszul sign\/} $\epsilon(\sigma)
=\epsilon(\sigma;\rada u1n) \in \{+1,-1\}$ is defined by
\[
u_1\odot\dots\odot u_n = \epsilon(\sigma;u_1,\dots,u_n)
\cdot u_{\sigma(1)}\odot \dots \odot u_{\sigma(n)}
\]
which has to be satisfied in the free graded commutative algebra
$S(\rada u1n)$ generated by $\rada u1n$.
If $k\geq 1$ and $a_1,\ldots,a_k$ are \nni{}s such that
$a_1+\cdots + a_k = n$, then an
$(a_1,\ldots,a_k)$-{\em unshuffle\/} of $n$
elements is a permutation $\sigma \in \Sigma_n$ such that
\[
\sigma(1) < \cdots <\sigma(a_1),\ \sigma(a_1+1) < \cdots< \sigma(a_1+a_2),
\cdots,
\ \sigma(n-a_k+1) < \cdots <\sigma(n).
\]
The subset of all $(a_1,\ldots,a_k)$-unshuffles will be denoted by
$\unsh_{\rada a1k} \subset \Sigma_n$.

\vskip .2em
\noindent {\bf Acknowledgment.}
The authors are indebted to D.~Bashkirov and the anonymous referees for
useful remarks, and to J.~Latschev for sharing a preliminary version
of his paper with K.~Cieliebak and K.~Fukaya. The first author also
wishes to express his thanks to M.~Doubek, B.~Jur\v co and I.~Sachs
for introducing him to the jungle of $\IBL_\infty$-algebras, and
acknowledge the hospitality of the University of Minnesota where he
held the position of a Distinguished Ordway Visitor during the last
stages of the work on this article. The second author gratefully
acknowledges support from the Graduate School of Mathematical
Sciences, the University of Tokyo, and the Simons Center for Geometry
and Physics, Stony Brook University, at which some of the research for
this paper was performed.

\section{MV-algebras}

\renewcommand*{\thefootnote}{\fnsymbol{footnote}}
\setcounter{footnote}{0}

\begin{definition}
\label{Prvni_den_po_navratu_z_Minneapolis}
Let $R$ be a complete local Noetherian commutative ring with the
residue field $\bfk$ and a maximal ideal $\frm$.  An
{\em\MV-algebra\/}\footnote{Abbreviating Markl-Voronov. Not to be
  mistaken with MV-algebras occurring in many-valued logic.}  over $R$
is a quadruple $V= (V,\mu,\delta,\Delta)$ consisting~of
\begin{itemize}
\item[(i)] a unital graded associative commutative $\bfk$-algebra
  $(V,\mu)$,
\item[(ii)] a \emph{conilpotent} counital graded coassociative
  cocommutative $\bfk$-coalgebra $(V,\delta)$, and
\item[(iii)] a continuous degree $+1$ $R$-linear map $\Delta : V\hot R
  \to V\hot R$ such that $\Delta^2=0$ and $\Delta(1) = 0$.
\end{itemize}
We require that the unit map $\eta : \bfk \to V$ for $(V,\mu)$ be a
coaugmentation for $(V,\delta)$ and the counit $\epsilon : V \to \bfk$
be an augmentation for $(V,\mu)$. These conditions imply that these
maps are nontrivial and $V \ne 0$.  In most situations, $R$ will
either be the ground field $\bfk$ or a power series ring
$\bfk[[\hbar]]$.
\end{definition}

\begin{remark}
Let $I = \Ker (\epsilon)$ be the augmentation ideal of $(V,\mu)$. The
{\em reduced diagonal\/} $\bardelta$ is defined by the standard formula
\[
\bardelta(v) := \delta(v) - (v \ot 1) - (1\ot v)
\]
for $v\in I$, while  $\bardelta(1) := 0$. We iterate
$\bardelta$ by putting $\bardelta^{[0]}(v) := v$ and
\[
\bardelta^{[k]}(v) : = \big(\bardelta \ot \id^{\ot k-1} \big)\bardelta^{[k-1]}(v)
\]
for $k \geq 1$.  The {\em conilpotency\/} of $(V,\delta)$ means that
for each $v \in V$, $\bardelta^{[k]}(v) = 0$ for $k$ large enough. Notice
that we {\em do not\/} require any compatibility between $\mu$ and
$\delta$, which would be required of $(V,\mu,\delta)$ to be a
bialgebra, a Frobenius algebra, \&c.
\end{remark}

\begin{remark}
\label{sec:mv-algebras-3}
Each $\MV$-algebra over $\bfk$ can be considered as an $\MV$-algebra
over $R$ after the $R$-linear extension of $\Delta$. In this manner we
obtain precisely those $\MV$-algebras over $R$ for which $\Delta(V)
\subset V$.
\end{remark}

\begin{example}
\label{sec:mv-algebras-2}
Every augmented unital commutative associative $\bfk$-algebra $A$ is
an \MV-algebra over $\bfk$ with the comultiplication defined by
$\delta(1) := 1 \ot 1$, while $\delta(v) := v \ot 1 + 1 \ot v$ for $v$
in the augmentation ideal of $A$, and $\Delta:= 0$.
\end{example}

\begin{example}
\label{sec:mv-algebras}
Dually, a conilpotent cocommutative coassociative $\bfk$-coalgebra
$(V,\delta)$ with a counit $\epsilon : V \to \bfk$ and a coaugmentation
$\eta: \bfk \to V$ is an \MV-algebra over $\bfk$ with a unit $\eta$,
augmentation $\epsilon$ and multiplication given by $1 \cdot v = v
\cdot 1:= v$ for each $v \in V$, and $v'\cdot v'' = 0$ for $v',v'' \in
\Ker(\epsilon)$. We again put $\Delta:= 0$.
\end{example}

\begin{example}
\label{supertrivial}
Each graded $\bfk$-vector space $V$ equipped with linear maps
$\epsilon : V \to \bfk$ and $\eta : \bfk \to V$ such that $\epsilon
\circ \eta = \id$ bears the `supertrivial'  unital algebra and
counital coalgebra
structures defined similarly as in the above two examples. Such $V$ is
therefore a `supertrivial' MV-algebra with any $R$-linear 
$\Delta: V \hot R \to V\hot R$ satisfying $\Delta^2 = 0$ and $\Delta
\circ \eta = 0$.  
\end{example}

\begin{example}
\label{Jaruska_si_maluje}
Let $R$ be such that the quotient $R/\frm^p$ is a finite-dimensional
$\bfk$-vector space for each $p \geq 1$.  The continuous $\bfk$-linear
dual $R^*$, with the comultiplication given by the dual of the
multiplication in $R$, is conilpotent by the completeness of $R$. In
particular, $R^*$ itself with $\Delta : = 0$ and trivial
multiplication, as in Example~\ref{sec:mv-algebras}, is an \MV-algebra
over $\bfk$.
\end{example}

\begin{example}
\label{uz_ctyri_hodiny}
A less trivial example of an \MV-algebra over $\bfk$ is obtained by
taking $V := S(U)$, the (graded) symmetric, or polynomial, algebra
generated by a graded vector space $U$, with the standard bialgebra
structure, and $\Delta$ a degree $1$ coderivation with $\Delta^2 = 0$
and $\Delta(1) = 0$. Such a structure is a disguise of a {\em strongly
  homotopy Lie\/} (${\rm L}_\infty$-) algebra
\cite[Theorem~2.3]{lada-markl:CommAlg95}.
\end{example}

Let $A$ be a unital associative commutative algebra and $\Delta :A \to
A$ be a $k$-linear map. For $n \geq 0$, consider the iterated graded
commutators
\begin{equation}
  [[\ \dots\ [\Delta, L_{a_1}], 
  \hbox {$. \hskip .05em  . \hskip .05em.$}], L_{a_n} ],
\end{equation}
$L_a$ denoting the operator of left multiplication by $a \in A$. By
convention, we just set the commutator of $\Delta$ with $n = 0$
left-multiplication operators to be $\Delta$.
We call an operator $\Delta$ an {\em order $\le k$ differential
  operator\/} if the iterated commutator with any $k+1$
left-multiplication operators vanishes.

Now suppose that $\Delta(1) = 0$. Define
\begin{equation}
\label{psano_v_Haife}
\Phi_n^\Delta(a_1,\ldots,a_n) := [[\
\dots\ [\Delta, L_{a_1}], 
\hbox {$. \hskip .05em  . \hskip .05em.$}], L_{a_n} ]( 1).
\end{equation}
In particular, $\Phi^\Delta_0 = 0$ and $\Phi^\Delta_1 = \Delta$. If
$\Phi_n^\Delta = 0$ for $n > k$, the operator $\Delta$ is called an
{\em order $k$ derivation\/}~\cite[Section~1.2]{markl:ab}). Notice
that first-order derivations are vector fields (derivations).

\begin{example}
\label{sec:mv-algebras-1}
Recall~\cite[Definition~7]{kravchenko1} that a {\em $($commutative$)$
  $\BV_\infty$-algebra\/} consists of a unital graded commutative
associative algebra $A$ and a $\bfk[[\hbar]]$-linear degree $1$ map
$\Delta : A[[\hbar]] \to A[[\hbar]]$ such that $\Delta^2 = 0$ and
$\Delta(1)=0$.\footnote{The continuity of $\Delta$ is automatic
  because $\bfk[[\hbar]]$ is regular, see~\cite[Proposition
  1.11]{markl12:_defor}.}  One moreover requires that $\Delta$
decomposes into a sum
\begin{equation}
\label{sec:mv-algebras-1bis}
\Delta = \Delta_1 + \hbar\Delta_2 + \hbar^2\Delta_3 + \cdots,
\end{equation}
where $\Delta_k : A \to A$ is a $\bfk$-linear order $\le k$
differential operator on $A$. If we assume that $A$ has an
augmentation as a graded commutative algebra and equip $A$ with the
comultiplication $\delta$ constructed in
Example~\ref{sec:mv-algebras-2}, then $A= (A,\mu,\delta,\Delta)$
becomes an \MV-algebra over~$\bfk[[\hbar]]$. The most common degree
convention is $\abs{\hbar} = 2$, which is implicit in
\cite{kravchenko1} and explicit in \cite{sasha's}, but one can
consider $\hbar$ of degree zero, as in \cite{munster-sachs}, or
arbitrary even degree, as in \cite{cl,cfl}. To comply with our
convention that the ground ring $R$ is not graded, we set $\abs{\hbar}
= 0$.
\end{example}

\begin{example}
\label{Treti_den_pekla}
Let $A :=S(U)$ be the symmetric algebra generated by a graded vector
space~$U$. An {\em $\IBL_\infty$-algebra\/} structure on
$U$~\cite{cfl}, 
\cite[\S4.2]{munster-sachs} is given by a degree $1$, $\bfk[[\hbar]]$-linear
map $\Delta : A[[\hbar]] \to A[[\hbar]]$ satisfying $\Delta^2=0$, $\Delta(1) =
0$, which decomposes as in~(\ref{sec:mv-algebras-1bis}).  In other
words, an $\IBL_\infty$-algebra is a $\BV_\infty$-algebra whose
underlying augmented commutative algebra is $S(U)$.  If we equip
$S(U)$ with the standard bialgebra structure,
$\big(S(U),\mu,\delta,\Delta\big)$ will be another example of an \MV-algebra
over $\bfk[[\hbar]]$.
\end{example}

\begin{remark}[Geometric interpretation, see Table~\ref{analogies}]
  One can think of an \MV-algebra $V$ over $R$ as the algebra of
  functions on a family, that is to say a fiber bundle, $X \to B$ of
  graded manifolds $B = \Spec(R)$, $X = \Spec(V\hot R)$, 
endowed with a square-zero differential operator
  $\Delta$ of degree one on $X$ over $B$, thereby, generalizing the
  notion of a differential graded manifold, when the operator happens
  to be a derivation linear over functions on $B$. This fiber bundle
  has a section $B \to X$, thought of as a family of basepoints, and
  the space of distributions on $X$ over $B$ is provided with a graded
  commutative product, not necessarily related to multiplication of
  functions on $X$, cf.~Table~\ref{analogies} for the origin of these
  geometric data. 
The only relationship between the products on
  functions and distributions we place is that the projection $X \to
  B$ and the section $B \to X$ must provide an augmentation and
  a~unit, respectively, for multiplication of distributions. This
  geometric object may be called an \emph{\MV-manifold\/}. A typical
  situation is when the fiber bundle is trivial: $F \times B \to B$,
  and one can think of an \MV-manifold as a $B$-parameterized family
  of differential operators on~$F$. See more on geometric analogies in
  Table~\ref{analogies}.
\begin{table}
\def\arraystretch{1.3}
\begin{center}
  \begin{tabular}{|l|l|} \hline \multicolumn{1}{|c|}{\textbf{Algebra}} & \multicolumn{1}{c|}{\textbf{Geometry}}\\
    \hline Ring $R$ & Base manifold $B$ \\
    \hline Graded algebra $V \hot R$ & Graded manifold $X$ \\
    \hline Unit $\eta: R \to V \hot R$ & Fiber bundle $X \to B$\\
    \hline Operator $\Delta$ & Differential operator on $X$ over $B$\\
    \hline Augmentation $\epsilon: V \hot R \to R$ & Section $B \to X$\\
    \hline The $R$-linear dual $\Lin_R(V \hot R, R)$ &  Distributions on $X$ over $B$\\
    \hline Comultiplication on $V \hot R$ & Multiplication of distributions on $X$ \\
    \hline
\end{tabular}
\end{center}
\caption{\protect\rule{0em}{1.8em}Dictionary of Geometric Analogies}
\label{analogies}
\end{table}

Since the definition is essentially symmetric with respect to the
algebra-coalgebra structures, we can dually consider the graded
algebra structure on the $\bfk$-linear dual $V^*$ defined by the
graded coalgebra structure on $V$ and think of $V^*$ as the algebra of
functions on a family $X^* \to B$ of ``formal pointed graded
manifolds'' endowed with a square-zero, odd ``differential operator''
over $B$ and the structure of a graded commutative algebra on the
space of distributions on $X^*$, along with similar compatibility
conditions between the units and counits.  We will call such geometric
objects \emph{dual \MV-manifolds\/}.  Depending on the situation,
either interpretation could be preferable. For instance, in
Example~\ref{uz_ctyri_hodiny}, the \MV-algebra $V=S(U)$ corresponding
to an ${\rm L}_\infty$-algebra $U[-1]$, where $U[-1]$ denotes an
appropriate degree shift, is usually interpreted as a \emph{formal
  differential graded manifold $U$\/}. This is an example of a dual
\MV-manifold $X = \Spec V^*$. On the other hand, in
Example~\ref{sec:mv-algebras-1}, the \MV-algebra $A$ may rather be
interpreted as a $\BV_\infty$-\emph{manifold\/} $\Spec A$, i.e.\ a
family of graded manifolds with an odd, square-zero differential
operator over $\Spec \bfk[[\hbar]]$. This is an example of an
\MV-manifold $\Spec A$.

Geometric objects of this nature may arise in various situations,
starting from a graded manifold or graded scheme $X$ provided with
extra structure, such as those in the following examples:
\begin{itemize}[leftmargin=0.8em,labelsep=0.4em,itemsep=.3em]
\item $X$ is a graded ``abelian Lie group,'' see Examples
  \ref{uz_ctyri_hodiny} and \ref{Treti_den_pekla}, in which $X$ is
  moreover a vector space (or a vector bundle over $B$).
\item $X$ has a ``volume density'' inducing a graded Frobenius algebra
  structure on the space of functions on $X$.
\item $X$ has a basepoint $x_0$ and is provided with a ``trivial''
  multiplication law on distributions so that distributions vanishing
  on constants multiply to zero and the delta function $\delta_{x_0}$
  serves as a unit, see Examples \ref{sec:mv-algebras-2} and
  \ref{sec:mv-algebras-1}. In particular, the delta functions of
  points multiply as follows: $\delta_x \cdot \delta_y := \delta_x +
  \delta_y - \delta_{x_0}$ for $x, y \in X$. In this case $X^*$ is a
  point with a ring $V^*$ of functions such that the maximal ideal
  $I^*$ of this point has trivial multiplication, $(I^*)^2 = 0$.
\item Dually, $X$ may have the ``infinitesimal'' geometry of a point
  enriched with a ring $V$ of functions such that the maximal ideal
  $I$ has trivial multiplication $I^2 = 0$, whereas the space $V^*$ of
  distributions may have an interesting multiplication, reflected in
  some nontrivial geometry of $X^*$, as in
  Example~\ref{sec:mv-algebras}.
\end{itemize}
\end{remark}

\section{The arithmetic of convolution product}
\label{sec:morphisms}

Let $V'= (V',\mu',\delta',\Delta')$,
$V''= (V'',\mu'',\delta'',\Delta'')$ be two MV-algebras, and let
$\Lin_\bfk(V',V'')$ be the set of degree-$0$ $\bfk$-linear maps
between their underlying spaces. It is well-known, see e.g.~\cite[\S
III.3]{kassel}, that the comultiplication $\delta'$ together with the
multiplication $\mu''$ induces on $\Lin_\bfk(V',V'')$ the structure of
a unital commutative associative augmented algebra, via the {\em
  convolution product\/}~$\star$.  Explicitly
\begin{equation}
\label{Psano_dole_v_hotelu}
f \star g := \mu'' (f \ot g) \delta'
\end{equation}
for $f,g \in \Lin_\bfk(V',V'')$. The unit $\sfe = \sfe_{V',V''}$ for
$\star$ is the composition $\eta''
\circ \epsilon'$ of the augmentation of $V'$ with the unit of
$V''$. The algebra  $\Lin_\bfk(V',V'')$ is augmented by the map that 
sends $f \in  \Lin_\bfk(V',V'')$ to
$\epsilon''\big(f(1)\big) \in \bfk$. Notice that
\begin{equation}
\label{eq:2}
\sfe_{V'',V'''} \circ \sfe_{V',V''} = \sfe_{V',V'''},\
\ \sfe_{V',V''} \circ \eta' = \eta'',\
\epsilon'' \circ \sfe_{V',V''} = \epsilon' .
\end{equation}

The above constructions clearly extend by $R$-linearity to the space
$\Lin_R(V' \hot R,V'' \hot R)$ of continuous $R$-linear maps which we
will, for brevity, denote by $\Lin_R(V',V'')$ believing that the
reader will not be too confused by this shorthand.

Let $\frm$ be the maximal ideal of $R$.  Denote by $\Lin_R^0(V',V'')$
the subset of $\Lin_R(V',V'')$ consisting of continuous $R$-linear
maps $f : V' \hot\, R \to V''\hot\, R$ such that
\begin{equation}
  \label{eq:1}
  \Im(f \circ \eta') \subset V'' \hot \, \frm.\footnote{We used the
    same symbol for the map $\eta'$ and its $R$-linear extensions. We keep this
    convention throughout the paper.}
\end{equation}
Notice that for $f,g
\in  \Lin_R^0(V',V'')$, 
\begin{equation}
\label{Psano_dole_v_hotelu_1}
f \star g \equiv \mu'' (f \ot g) \bardelta' \ \hbox{ mod } \ V'' \hot\, \frm, 
\end{equation}
where $\bardelta'$ is the reduced diagonal associated to $\delta'$,
and moreover $\Lin_R^0(V',V'')$ is an ideal in $\Lin_R(V',V'')$. We
leave to prove as an exercise that the
conilpotency of $\delta'$ together with the completeness of $R$
implies:

\begin{lemma}
  All power series in elements of $\Lin_R^0(V',V'')$
  converge.\footnote{Convergence is always understood as convergence
    in the $\frm$-adic topology.}
\end{lemma}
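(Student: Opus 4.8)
The plan is to prove convergence of $\sum_k c_k f_1 \star \cdots \star f_k$ (with $f_i \in \Lin_R^0(V',V'')$) by showing that the partial sums form a Cauchy sequence in the $\frm$-adic topology. Since $R$ is complete and the target $V'' \hot R$ is $\frm$-adically complete, it suffices to show that for each fixed $n$, all but finitely many terms of the series, when evaluated on a fixed element $x \in V' \hot R$, land in $V'' \hot \frm^n$. Because $\Lin_R^0(V',V'')$ is closed under the convolution product (it is an ideal, hence in particular a subalgebra without unit), each $k$-fold product $f_1 \star \cdots \star f_k$ again lies in $\Lin_R^0$, so the only issue is to control the $\frm$-adic order of such iterated products as $k \to \infty$.

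First I would unwind the iterated convolution product using coassociativity of $\delta'$. A standard computation shows
\begin{equation*}
f_1 \star \cdots \star f_k = \mu''^{[k-1]} (f_1 \ot \cdots \ot f_k)\, \delta'^{[k-1]},
\end{equation*}
where $\delta'^{[k-1]}$ is the $(k-1)$-fold iterated comultiplication and $\mu''^{[k-1]}$ the iterated multiplication. The key observation, using \eqref{Psano_dole_v_hotelu_1}, is that modulo $V'' \hot \frm$ one may replace each full comultiplication by the \emph{reduced} diagonal $\bardelta'$: since every factor $f_i$ satisfies $\Im(f_i \circ \eta') \subset V'' \hot \frm$, any tensor slot in $\delta'^{[k-1]}(x)$ carrying the coaugmentation unit $1$ contributes a factor lying in $V'' \hot \frm$. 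Iterating this bookkeeping $n$ times, I would show that
\begin{equation*}
f_1 \star \cdots \star f_k \equiv \mu''^{[k-1]} (f_1 \ot \cdots \ot f_k)\, \bardelta'^{[k-1]} \pmod{V'' \hot \frm^n}
\end{equation*}
whenever $k > $ some bound depending on $n$ — more precisely, each appearance of the unit in an un-reduced slot gains one power of $\frm$, so a $k$-fold product with fewer than $n$ reduced slots is already $\equiv 0 \pmod{V'' \hot \frm^n}$.

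Now the conilpotency of $(V',\delta')$ enters decisively: for a fixed homogeneous $x \in V'$, there exists $K = K(x)$ with $\bardelta'^{[k]}(x) = 0$ for all $k \geq K$, and this is exactly the iterated reduced diagonal appearing above. Combining the two estimates, for $k$ large enough (relative to both $n$ and $K(x)$) the term $f_1 \star \cdots \star f_k$ applied to $x$ either vanishes outright, because the reduced part dies by conilpotency, or is forced into $V'' \hot \frm^n$ because too few reduced slots survive. Hence $(f_1 \star \cdots \star f_k)(x) \to 0$ $\frm$-adically, the partial sums are Cauchy, and the series converges by completeness of $V'' \hot R$. For a general $x \in V' \hot R$ one reduces to the case of $x \in V'$ by $R$-linearity and $\frm$-adic continuity, approximating $x$ modulo $V' \hot \frm^n$ by finite sums of elements of $V'$.

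The main obstacle I anticipate is the simultaneous bookkeeping of the two independent sources of $\frm$-adic smallness: the powers of $\frm$ gained from the condition \eqref{eq:1} on each factor, and the vanishing coming from conilpotency of the reduced iterated diagonal. One must verify that the filtration degree truly grows without bound in $k$ — that it is not possible to keep producing terms of bounded $\frm$-order indefinitely by balancing reduced against un-reduced slots. The clean way to organize this is to observe that a nonzero contribution modulo $V'' \hot \frm^n$ requires at least $k-n$ of the $k$ comultiplication slots to be reduced, so by conilpotency such contributions exist for only finitely many $k$; making this counting precise, rather than the individual estimates, is the delicate step.
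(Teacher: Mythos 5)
The paper contains no proof of this lemma at all: it is explicitly left ``as an exercise,'' with the hint that the conilpotency of $\delta'$ together with the completeness of $R$ is what makes it work, so the only comparison available is with that hint --- and your proof is correct and is precisely the intended argument, resting on the identity $f_1 \star \cdots \star f_k = \mu''^{[k-1]}(f_1 \ot \cdots \ot f_k)\,\delta'^{[k-1]}$, the fact that every unit slot contributes a factor $f_i(1) \in V'' \hot\, \frm$, and conilpotency killing terms with too many reduced slots. One caveat: your displayed intermediate congruence, asserting that modulo $V'' \hot\, \frm^n$ only the fully reduced term $\mu''^{[k-1]}(f_1 \ot \cdots \ot f_k)\,\bardelta'^{[k-1]}$ survives once $k$ exceeds a bound depending only on $n$, is not literally true --- terms having between $k-n+1$ and $k-1$ reduced slots acquire fewer than $n$ powers of $\frm$ and are not suppressed by the filtration argument alone, so the bound must already involve $K(x)$. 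Your final paragraph, however, has exactly the right bookkeeping: a term that is nonzero modulo $V'' \hot\, \frm^n$ must have more than $k-n$ reduced slots, conilpotency forbids more than $K(x)$ of them, hence $(f_1 \star \cdots \star f_k)(x) \in V'' \hot\, \frm^n$ for all $k \geq K(x)+n$, and this estimate together with completeness of $V'' \hot R$ and your reduction from $x \in V' \hot R$ to $x \in V'$ constitutes a complete proof.
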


In particular, for  $f \in \Lin_R^0(V',V'')$ it makes sense to take
\begin{equation}
\label{Zitra_budeme_s_Jaruskou_pit_portske.}
\exp(f) := \sfe + f + \frac {f^2}{2!} + \frac {f^3}{3!} + \cdots \in
\Lin_R(V',V'') \quad \hbox {(powers w.r.\ to the $\star$-product).}
\end{equation}
Notice that, while $\exp(f) \notin \Lin_R^0(V',V'')$, clearly
$\exp(f)-\sfe \in \Lin_R^0(V',V'')$.

If $g \in \Lin_R^0(V',V'')$, then $\sfe + g \in \Lin_R(V',V'') $
and, thanks to~\eqref{eq:2}, we have
\[
(\sfe + g) \circ \eta' \equiv \eta'' \ \hbox { mod } \ V''\hot\,\frm
\]
and the power series
\[
\log(\sfe + g) := g - \frac{g^2}{2} + \frac{g^3}{3} - \dots
\in \Lin_R^0(V',V'')
\]
converges.  For $f, g \in \Lin_R^0(V',V'')$, one clearly has the
expected equalities
\[
\log (\exp(f)) = f, \quad \exp(\log(\sfe + g)) = \sfe + g.
\]

\begin{example}
\label{za_14_dni_Izrael}
Let $G(V') := \{v \in V' \; | \; \delta'(v) = v \ot v \}$ be the
subset of group-like elements in $V'$, and $v \in G(V') \hot R$. Then,
for any $f,g \in \Lin_R(V',V'')$,
\[
(f \star g)(v) =
\mu'' (f \ot_R g) \delta'(v) =
\mu'' (f \ot_R g)(v \ot_R v) = \mu''\big(f(v),g(v)\big), 
\]
so $(f \star g)(v)$ is the `actual' product of the elements $f(v)$ and
$g(v)$ in the algebra $V''$. Consequently, for $f \in
\Lin^0_R(V',V'')$ and $v \in G(V') \hot R$,
\[
\exp(f)(v) = e^{f(v)},
\]
the `actual' exponential of $f(v)$ in the algebra $V''$; similarly for
the logarithm.
\end{example}

\begin{proposition}
\label{multiplicativity}
Given two MV-algebras $V'$ and $V''$ and $f \in \Lin_R^0(V',V'')$,
suppose that $V'$ is in fact a bialgebra, i.e.\ $\delta'$ is an
algebra morphism. Then, for any $v_1, v_2 \in V'$,
\begin{equation}
\label{v_Myluzach}
\exp(f) (v_1 v_2) \equiv \exp (f)(v_1) \exp (f) (v_2) \mod \big(V'' \hot
\frm,f(I'^2)\big),
\end{equation}
where $I'$ is the augmentation ideal of $V'$. In other words,
$\exp(f)$ is an algebra morphism modulo the ideal in $V'' \hot R$
generated by $V'' \hot \frm$ and $f(I'^2)$.
\end{proposition}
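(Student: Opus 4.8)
The plan is to expand both sides in terms of iterated (co)products and then to isolate, modulo the stated ideal, the only surviving contributions. Write $\mu''^{(n)}\colon (V'')^{\ot n}\to V''$ and $\delta'^{(n)}\colon V'\to (V')^{\ot n}$ for the iterated multiplication and comultiplication, with the conventions $\mu''^{(0)}=\eta''$, $\delta'^{(0)}=\epsilon'$ and $\mu''^{(1)}=\delta'^{(1)}=\id$. Expanding the $\star$-exponential gives $\exp(f)(v)=\sum_{n\ge0}\frac1{n!}\mu''^{(n)}f^{\ot n}\delta'^{(n)}(v)$, the $n$-th summand being $\frac1{n!}f^{\star n}(v)$. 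Since $\delta'$ is by hypothesis an algebra morphism and is coassociative, every $\delta'^{(n)}$ is an algebra morphism, so $\delta'^{(n)}(v_1v_2)=\delta'^{(n)}(v_1)\cdot\delta'^{(n)}(v_2)$ in $(V')^{\ot n}$.

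First I would record the counit splitting of the iterated coproduct: writing $\tilde v:=v-\epsilon'(v)1\in I'$ and letting $(\bardelta')^{[k-1]}(\tilde v)\in (I')^{\ot k}$ denote the $(k-1)$-fold iterated reduced diagonal of $\delta'$, one has $\delta'^{(n)}(v)=\sum_{S}\big(\text{place the } |S| \text{ factors of } (\bardelta')^{[|S|-1]}(\tilde v) \text{ in the slots of } S,\ \text{and } 1 \text{ elsewhere}\big)$, the sum being over subsets $S\subseteq\{1,\dots,n\}$ (with $S=\emptyset$ contributing $\epsilon'(v)\,1^{\ot n}$). Substituting this for both $v_1$ and $v_2$ and multiplying slotwise, each slot $j$ carries a product of two factors, each equal to $1$ or to an element of $I'$. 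Two reductions now apply: any slot in which both factors lie in $I'$ produces, after $f$, an element of $f(I'^2)$ and hence lies in the ideal; and any slot carrying the product $1\cdot 1=1$ contributes the factor $f(1)$, which lies in $V''\hot\frm$ because $f\in\Lin_R^0(V',V'')$. Therefore, modulo $\big(V''\hot\frm,\,f(I'^2)\big)$, the only surviving terms are those in which every slot receives exactly one $I'$-factor, coming from $v_1$ on a subset $S_1$ and from $v_2$ on the complement $S_2=\{1,\dots,n\}\setminus S_1$.

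It then remains to assemble the surviving terms. For $|S_1|=p$ and $|S_2|=q=n-p$, the contribution is $\mu''^{(n)}$ applied to the tensor obtained by placing the $p$ factors of $(\bardelta')^{[p-1]}(\tilde v_1)$ into the slots indexed by $S_1$ and the $q$ factors of $(\bardelta')^{[q-1]}(\tilde v_2)$ into those indexed by $S_2$. Using the graded commutativity of $\mu''$ together with the cocommutativity of $\delta'$, each of the $\binom np$ choices of $S_1$ yields the same value $\mu''\big(E_p(v_1),E_q(v_2)\big)$, where $E_p(v):=\mu''^{(p)}f^{\ot p}(\bardelta')^{[p-1]}(\tilde v)$ for $p\ge1$ and $E_0(v):=\epsilon'(v)\,1$. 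Since $\binom np=n!/(p!\,q!)$, summing over $p+q=n$ and over $n$ cancels the factor $1/n!$ and factorizes the total as $\mu''\big(\sum_{p\ge0}\frac1{p!}E_p(v_1),\ \sum_{q\ge0}\frac1{q!}E_q(v_2)\big)$. Applying the same counit splitting to $\exp(f)$ itself shows $\exp(f)(v)\equiv\sum_{p\ge0}\frac1{p!}E_p(v)\bmod V''\hot\frm$, whence this last product equals $\exp(f)(v_1)\cdot\exp(f)(v_2)$ modulo the ideal, which is the assertion \eqref{v_Myluzach}.

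I expect the main obstacle to be the sign bookkeeping in the assembling step: one must verify that the Koszul signs produced by distributing the reduced-coproduct factors into the slots indexed by $S_1\sqcup S_2$, and then regrouping them (via an unshuffle) into the standard order inside $\mu''^{(n)}$, cancel, so that all $\binom np$ terms for a fixed $(p,q)$ genuinely coincide. This is precisely where the cocommutativity of $\delta'$ and the graded commutativity of $\mu''$ are used in tandem; everything else reduces to the formal manipulation of iterated (co)products and the two ideal reductions described above.
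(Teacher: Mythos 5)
Your proof is correct and follows essentially the same route as the paper's: both exploit the bialgebra hypothesis to expand the (iterated) coproduct of $v_1v_2$, kill cross terms using the two reductions $f(I'^2)\equiv 0$ and $f(1)\in V''\hot\,\frm$, and reassemble the surviving terms with binomial coefficients into the product of two exponential series. The only difference is bookkeeping: the paper establishes the congruence $f^n(v_1v_2)\equiv\sum_{1\le i\le n-1}\binom{n}{i}f^i(v_1)f^{n-i}(v_2)$ by induction on convolution powers, whereas you expand all iterated coproducts at once via the counit splitting and sort slots --- the same computation organized globally rather than inductively.
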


\begin{proof}
Since~\eqref{v_Myluzach} obviously holds if $v_1$ or $v_2$ are
proportional to $1\in V'$, we will assume that $v_1,v_2 \in I'$. In
this case, since $\epsilon'(v_1) =  \epsilon'(v_2) = 0$, 
$\exp(f)(v_1) \exp (f)(v_2) $ is the product
\begin{equation}
\label{Vcera_vylet}
 \big(f(v_1) + \frac{1}{2!} f^2(v_1) + \frac{1}{3!} f^3(v_1) +
 \cdots\big)\big(f(v_2) + \frac{1}{2!} f^2(v_2) + \frac{1}{3!} f^3(v_2) + \cdots\big)
\end{equation}
while
\begin{equation}
\label{Zas_jsem_podlehl}
 \exp (f)(v_1v_2) = f(v_1v_2) + \frac{1}{2!} f^2(v_1v_2) + \frac{1}{3!} f^3(v_1v_2) +
 \cdots.
\end{equation}
As $\delta'$ is, by assumption, an algebra morphism, one has
\begin{align*}
  \delta'(v_1v_2)& = \delta'(v_1)\delta'(v_2) = \big(v_1\ot 1 +
  \dred(v_1) + 1\ot v_1\big)\big(v_2\ot 1 + \dred(v_2) + 1\ot v_2\big)
  \\
  &= v_1\ot v_2 + (-1)^{\abs{v_1} \cdot \abs{v_2}} v_2\ot v_1 +
  v_1v_2\ot 1 + 1 \ot v_1v_2 + \dred(v_1)\dred(v_2)
  \\
  & \quad +(1 \ot v_1)\dred(v_2) + (v_1\ot 1)\dred(v_2) +
  \dred(v_1)(1\ot v_2) + \dred(v_1)(v_2\ot 1) .
\end{align*}
Notice that $\mu'' \circ (f\ot f)$ applied to the terms on the
right-hand side of the above equation vanishes modulo $\big(V'' \hot
\frm,f(I'^2)\big)$ everywhere except at the terms $v_1\ot v_2$ and
$v_2\ot v_1$, thus
\[
f^2(v_1v_2) = \mu'' (f\ot f) \delta'(v_1v_2) \equiv \mu'' \big(f(v_1)
\ot f(v_2) + (-1)^{\abs{v_1} \cdot \abs{v_2}} f(v_2) \ot f(v_1)\big)
=2f(v_1)f(v_2)
\]
modulo $\big(V'' \hot \frm,f(I'^2)\big)$.
Similarly we obtain that
\[ 
f^3(v_1v_2) = \mu'' (f^2\ot f) \delta'(v_1v_2) \equiv 3f^2(v_1)f(v_2)
+ 3f(v_1)f^2(v_2) \mod \big(V'' \hot \frm,f(I'^2)\big)
\]
and, inductively, 
\[
f^n(v_1v_2) \equiv \sum_{1 \leq i \leq n-1} \binom{n}{i} f^i
(v_1)f^{n-i}(v_2)
\mod  \big(V'' \hot
\frm,f(I'^2)\big).
\]
This formula makes the verification that the product~(\ref{Vcera_vylet})
equals~(\ref{Zas_jsem_podlehl}) modulo $\big(V'' \hot
\frm,f(I'^2)\big)$ obvious.
\end{proof}

\begin{corollary}
\label{modm}
Under the assumptions of Proposition~\ref{multiplicativity}, suppose
also that $f(I'^2) \subset V'' \hot \frm$. Then, for any $v_1, v_2 \in
V'$,
\begin{equation}
\exp(f) (v_1 v_2) \equiv \exp (f)(v_1) \exp (f) (v_2) \mod V'' \hot \frm.
\end{equation}
Thus, $\exp(f)$ is an algebra morphism modulo $V'' \hot \frm$.
\end{corollary}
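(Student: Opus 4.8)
The plan is to reduce the statement directly to Proposition~\ref{multiplicativity}. Under the hypotheses of the Corollary---which include all those of the Proposition---that result yields the congruence
\[
\exp(f)(v_1 v_2) \equiv \exp(f)(v_1)\,\exp(f)(v_2) \mod \big(V'' \hot \frm,\, f(I'^2)\big) .
\]
First I would observe that $V'' \hot \frm$ is already an ideal of $V'' \hot R$, being the extension to the completed tensor product of the ideal $\frm \subset R$; hence the ideal it generates is itself. The only remaining point is that adjoining the extra generators $f(I'^2)$ cannot enlarge this ideal under the added assumption $f(I'^2) \subset V'' \hot \frm$: each such generator already lies in $V'' \hot \frm$, so the ideal $\big(V'' \hot \frm,\, f(I'^2)\big)$ coincides with $V'' \hot \frm$.

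Next I would substitute this equality of ideals into the congruence above, which immediately gives the asserted congruence modulo $V'' \hot \frm$ alone. The concluding sentence---that $\exp(f)$ is an algebra morphism modulo $V'' \hot \frm$---is then merely a restatement of the displayed congruence, valid for all $v_1, v_2 \in V'$ since, as already noted in the proof of the Proposition, the congruence holds trivially whenever $v_1$ or $v_2$ is proportional to $1 \in V'$.

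The main---and essentially only---obstacle is the bookkeeping of ideals: one must confirm that $V'' \hot \frm$ genuinely is an ideal of $V'' \hot R$, so that the two moduli truly coincide and the congruence transfers intact. This is immediate from the definition of the completed tensor product together with the fact that $\frm$ is an ideal of $R$, so I expect no substantive difficulty here; the Corollary is a formal consequence of the Proposition, obtained simply by noting that the stronger hypothesis absorbs the generator $f(I'^2)$ into $V'' \hot \frm$.
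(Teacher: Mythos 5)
Your proof is correct and follows exactly the route the paper intends: the paper states Corollary~\ref{modm} without proof as an immediate consequence of Proposition~\ref{multiplicativity}, and your argument---that the hypothesis $f(I'^2) \subset V'' \hot \frm$ makes the ideal $\big(V'' \hot \frm, f(I'^2)\big)$ coincide with $V'' \hot \frm$, which is itself an ideal of $V'' \hot R$---is precisely the absorption step being left to the reader.
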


\begin{example}
Let us show that Proposition~\ref{multiplicativity} cannot be
strengthened. Take $V' = V'' = \bfk$ with the obvious bialgebra
structure. Then $f \in \Lin_R^0(\bfk,\bfk)$ is determined by $\alpha
:= f(1)
\in \frm$. By definition, $\exp (f)(v) =
v e^\alpha$ for $v \in \bfk$, so we have  
\[
\exp(f) (v_1 v_2) = v_1 v_2  e^\alpha
\]
while 
\[
\exp(f)(v_1) \exp (f) (v_2) =  v_1 v_2  e^{2\alpha}.
\]
The map $ \exp (f): \bfk \to \bfk$ is in this case an algebra morphism
only modulo the ideal generated by $\alpha \in \frm$.

As the second example, take $V' = V'' = S(U)$ with the standard
bialgebra structure, $R := \bfk$, and assume that $f :S(U) \to
S(U)$ is such that $f|_{S^n(U)} \not =0$ only for $n=2$. One then
easily calculates that, for
$v_1,v_2 \in U$,  
\[
 \exp (f)(v_1) = \exp (f)(v_2) = 0
\]
while 
\[
 \exp (f)(v_1v_2) =
f(v_1v_2).
\]
The map $ \exp (f) : S(U) \to S(U)$ is  an algebra morphism only modulo
$f\big(S^{\geq 2}(U)\big)$.

Assume now that $f|_{S^n(U)} \not =0$ only for $n=1$. Then
$f\big(S^{\geq 2}(U)\big) = 0$ and, since $R = \bfk$, also $\frm =
0$. Formula~(\ref{v_Myluzach}) therefore asserts that $ \exp (f)$ is
an algebra morphism. We leave as an exercise to verify that indeed $\exp(f)$
provides the unique extension of a linear map $U \to S(U)$ into an
algebra morphism.
\end{example}

\begin{remark}
\label{psano_v_St_Paule}
  Proposition~\ref{multiplicativity} and Corollary~\ref{modm} provide
  sufficient conditions for $\exp(f)$ to be a perturbation of an
  algebra morphism. See also Examples \ref{ibl} and \ref{bv}, in which
  $\exp(f)$ will be an $\hbar$-perturbation of an algebra morphism. In
  the general case, one can view $\exp(f)$ as a generalized algebra
  morphism.
\end{remark}

\begin{example}
\label{sec:morphisms-2}
Let us describe the exponential of an $R$-linear map $f \in
\Lin^0_R\big(S(U), B)$, where $S(U) = \big(S(U),\delta, \eta\big)$ is
the symmetric algebra generated by a~graded vector space~$U$ with the
standard cocommutative coassociative counital comultiplication, and $B
= (B,\mu,1)$ a commutative associative unital algebra.

\def\squeezedcdots{\hskip -.15em \cdot \hskip -.15em \cdot \hskip
  -.15em \cdot\hskip -.15em } Denoting, as usual, by $\delta^{[k-1]} :
S(U) \to S(U)^{\ot k}$ the diagonal iterated $(k-1)$-times and,
likewise, by $\mu^{[k-1]} : B^{\ot k} \to B$ the $R$-linear extension
of the iterated multiplication in $B$, we have, for $k \geq 1$ and $f$
as~above
\begin{equation}
\label{prsi}
f^k = \mu^{[k-1]} \circ f^{\ot k} \circ \delta^{[k-1]}.
\end{equation}
It is easy to verify that the $(k-1)$-times iterated diagonal on
the product  $u_1 \odot \cdots \odot u_n \in S^n(U)$ of
$u_1,\ldots,u_n \in U$  equals
\begin{equation}
\label{vedro}
\delta^{[k-1]}(\hskip -.05em u_1 \odot \squeezedcdots \odot u_n \hskip -.05em ) =
\sum
\frac {\epsilon(\sigma)}{a_1! \cdots a_k!}
 [u_{\sigma(1)} \odot \squeezedcdots \odot u_{\sigma(a_1)}] \ot \cdots \ot
[u_{\sigma(n-a_k+1)} \odot \squeezedcdots \odot u_{\sigma(n)}],
\end{equation}
where the summation runs over all permutations $\sigma\in\Sigma_k$ and
integers $a_1,\ldots,a_k \geq 0$
such that $a_1 +
\cdots + a_k = n$, 
and $\epsilon(\sigma)$ is the Koszul sign. 
Here we also use the convention that if $a_i = 0$, then
\[
[u_{\sigma(a_1 + \dots + a_{i-1})} \odot \cdots \odot u_{\sigma(a_1 +
  \dots + a_{i})} ] = 1.
\]
Evaluating $\delta^{[k-1]}$ in~(\ref{prsi}) using~(\ref{vedro}) gives,
for  $u_1 \odot \cdots \odot u_n \in S^n(U)$ and $n \geq 1$,
\begin{align}
\label{exp}
\exp(f)(\hskip -.05em u_1 \odot \squeezedcdots \odot u_n \hskip -.05em )& =
\\ 
\nonumber 
\sum_{k=1}^\infty \frac1{k!} &  \sum
\frac {\epsilon(\sigma)}{a_1! \cdots a_k!}
f(u_{\sigma(1)}\odot \squeezedcdots \odot u_{\sigma(a_1)})\cdots 
f(u_{\sigma(n-a_k+1)}\odot \squeezedcdots \odot u_{\sigma(n)}).
\end{align}
We recognize a formula in~\cite[Section~5]{cl}. Notice that, thanks to the
commutativity of the multiplication $\odot$ in $S(U)$, this formula can
be rewritten as
\begin{equation}
\label{Zitra_vylet_na_kole_podel_Missisipi}
\exp(f)(\hskip -.05em u_1 \odot \squeezedcdots \odot u_n \hskip -.05em
) =
\sum_{k=1}^\infty \frac1{k!} \sum
 {\epsilon(\sigma)}
f(u_{\sigma(1)}\odot \squeezedcdots \odot u_{\sigma(a_1)})\cdots 
f(u_{\sigma(n-a_k+1)}\odot \squeezedcdots \odot u_{\sigma(n)})
\end{equation}
where $\sigma$ runs now over 
all $(a_1,\ldots,a_k)$-unshuffles of $n$
elements. The calculation is completed~by 
\[
\exp(f)(1) = e^{f(1)},
\] 
the `actual' exponential of $f(1)$ in $B \hot \frm$.
\end{example}

\section{The category of MV-algebras}

In this section we define the category of MV-algebras over $R$. Firstly,
we introduce morphisms:

\begin{definition}
The space of \emph{(MV-)mor\-phisms} between
\MV-algebras $V'= (V',\mu',\delta',\Delta')$ and $V''=
(V'',\mu'',\delta'',\Delta'')$ is given~by
\[
\ttMV_R(V',V'') := \big\{ f \in \Lin_R^0(V',V'')\; | \;
\Delta'' \circ \exp(f) = \exp(f) \circ \Delta'\big\} .
\]
The categorical composition of $f \in \ttMV_R(V'',V''')$ with $g \in
\ttMV_R(V',V'')$ is defined as
\[
f \diamond g : = \log(\exp (f) \circ \exp(g)).
\]
The unit endomorphism of an \MV-algebra $V$ over $R$ is
$\un_V := \log(\id_{V\hot R})$, which is defined because
$(\id_{V\hot R} - \sfe) \circ \eta = \eta - \eta = 0$, whence
$\id_{V\hot R} - \sfe \in \Lin_R^0(V,V)$.

\end{definition}

\begin{theorem}
  $\ttMV_R$ with the above notion of morphisms forms a category.
\end{theorem}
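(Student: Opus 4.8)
The plan is to transport the entire structure to the ``exponential picture,'' where $\diamond$-composition becomes ordinary composition of linear maps and the morphism condition becomes an intertwining relation. Concretely, the identities $\log(\exp(f)) = f$ and $\exp(\log(\sfe + g)) = \sfe + g$ recalled above show that $f \mapsto \exp(f)$ is a bijection from $\Lin^0_R(V',V'')$ onto the coset $\sfe_{V',V''} + \Lin^0_R(V',V'')$, with inverse $\sfe + g \mapsto \log(\sfe+g)$. Under this bijection, membership $f \in \ttMV_R(V',V'')$ says exactly that $F := \exp(f)$ intertwines the differentials, $\Delta'' \circ F = F \circ \Delta'$; the definition of $\diamond$ reads $\exp(f \diamond g) = \exp(f) \circ \exp(g)$; and $\exp(\un_V) = \id_{V \hot R}$. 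So it suffices to check that, on the exponential side, the maps of the form $\sfe + \Lin^0_R$ that intertwine the $\Delta$'s are closed under $\circ$ and contain the identities, after which the category axioms follow for free from the associativity and unitality of $\circ$.

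The one step requiring an actual argument --- and the main potential obstacle --- is that $\diamond$ is well defined, i.e.\ that $\exp(f) \circ \exp(g)$ again lies in $\sfe + \Lin^0_R$, so that $\log$ may be applied and $\exp(f \diamond g) = \exp(f)\circ\exp(g)$. Since membership in $\sfe + \Lin^0_R$ is the single condition $F(1) \equiv 1 \bmod \frm$ (using $\sfe(1)=1$), I would argue as follows. Writing $G := \exp(g)$, the group-likeness of $1$ gives $G(1) = e^{g(1)} = 1 + w$ with $w \in V'' \hot \frm$, by Example~\ref{za_14_dni_Izrael}; here $g(1)\in V''\hot\frm$ because $g \in \Lin^0_R$. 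Next, $F := \exp(f)$ is a convergent $\star$-power series of continuous $R$-linear maps, hence itself continuous and $R$-linear, so it preserves the $\frm$-adic filtration: $F(V'' \hot \frm) \subset V''' \hot \frm$. Therefore $(F \circ G)(1) = F(1) + F(w) \equiv F(1) \equiv 1 \bmod \frm$, so $F \circ G - \sfe \in \Lin^0_R$ as required. Consequently $f \diamond g := \log(\exp(f)\circ\exp(g)) \in \Lin^0_R$ and $\exp(f\diamond g) = \exp(f)\circ\exp(g)$.

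With well-definedness in hand, closure under the intertwining condition is immediate: if $\Delta''' \circ F = F \circ \Delta''$ and $\Delta'' \circ G = G \circ \Delta'$, then $\Delta'''\circ(F\circ G) = F\circ\Delta''\circ G = (F\circ G)\circ\Delta'$, so $f \diamond g \in \ttMV_R(V',V''')$. The unit $\un_V$ lies in $\ttMV_R(V,V)$ because $\exp(\un_V) = \id$ trivially intertwines $\Delta$ with itself. Finally, associativity and the unit laws are inherited verbatim from $\circ$: applying the injective map $\exp$, one has $\exp((f\diamond g)\diamond h) = (\exp(f)\circ\exp(g))\circ\exp(h) = \exp(f)\circ(\exp(g)\circ\exp(h)) = \exp(f\diamond(g\diamond h))$, while $\exp(f \diamond \un_{V'}) = \exp(f)\circ \id = \exp(f) = \id\circ\exp(f) = \exp(\un_{V''}\diamond f)$; since $\exp$ is injective, these give $(f\diamond g)\diamond h = f\diamond(g\diamond h)$ and $f\diamond\un_{V'} = f = \un_{V''}\diamond f$. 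This completes the verification that $\ttMV_R$ is a category.
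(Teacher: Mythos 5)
Your proof is correct and at its core takes the same route as the paper's: both pass through the bijection $\exp \colon \Lin^0_R \to \sfe + \Lin^0_R$ (with inverse $\log$), so that $\diamond$-composition becomes ordinary composition of maps, after which associativity, the unit laws, and closure under the intertwining condition $\Delta'' \circ F = F \circ \Delta'$ are immediate. The one genuine difference is that you isolate and prove a point the paper's proof takes for granted: that $\diamond$ is well defined, i.e.\ that for $g \in \ttMV_R(V',V'')$ and $f \in \ttMV_R(V'',V''')$ the composite $\exp(f)\circ\exp(g)$ again lies in $\sfe + \Lin^0_R(V',V''')$, so that $\log$ may legitimately be applied and $\exp(f\diamond g) = \exp(f)\circ\exp(g)$. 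The paper's computation $\log\big(\exp(f\diamond g)\circ\exp(h)\big) = \log\big(\exp(f)\circ\exp(g)\circ\exp(h)\big)$ already presupposes exactly this fact. Your verification of it is sound: membership of a continuous $R$-linear map in $\sfe + \Lin^0_R$ is precisely a condition on its value at $\eta'(1)$; group-likeness of $1$ gives $\exp(g)(1) = e^{g(1)} \equiv 1 \bmod V''\hot\,\frm$, and continuity plus $R$-linearity give $\exp(f)(V''\hot\,\frm) \subset V'''\hot\,\frm$, whence $\big(\exp(f)\circ\exp(g)\big)(1) \equiv \exp(f)(1) \equiv 1 \bmod V'''\hot\,\frm$. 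So your write-up is, if anything, more complete than the paper's terse argument, at the modest cost of recording the filtration-preservation observation that makes the exp/log dictionary work.
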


\begin{proof}
Let us verify the associativity of the categorical composition.
By definition,
\[
(f \diamond g) \diamond h =
\log(\exp (f \diamond g) \circ \exp{h}) =
\log (\exp(f) \circ \exp (g) \circ \exp(h))
\]
while
\[
f \diamond (g
\diamond h) = 
\log(\exp f \circ \exp(g   \diamond h)) =
\log (\exp(f) \circ \exp (g) \circ \exp(h)),
\]
so $(f \diamond g)
\diamond h = f \diamond (g
\diamond h)$ as required.  
To verify the axiom for the categorical identity is also easy; one has
\[
f \diamond \un_V = \log (\exp(f) \circ \id_{V \hot R}) =  \log (\exp(f)) = f.
\]
The identity $\un_V \diamond f = f$ is verified in a similar fashion.
The last thing which remains to be verified is that $\Delta''' \circ
\exp (f \diamond g) = \exp (f \diamond g) \circ \Delta' $ which
follows from
\[
\Delta''' \circ \exp (f \diamond g)  =
\Delta''' \circ \exp (f) \circ  \exp(g)  =
\exp (f) \circ  \exp(g) \circ \Delta' =
\exp (f \diamond g) \circ \Delta' 
\]
\end{proof}

\begin{example}
  Let us show that, as stated in~\cite{sasha's}, the unit endomorphism
  \[
\un_{S(U)} \in \Lin^0_{\bfk}\big(S(U),S(U)\big)
\]
of the $\IBL_\infty$-algebra $S(U)$ recalled in
Example~\ref{Treti_den_pekla} is the projection $\pi_1:S(U) \to U$ to
the space of algebra generators. Since $\pi_1$ is the projection to
$U$, it follows from  formula~(\ref{vedro}) for the iterated
diagonal that $
\pi_1^{\ot k} \circ \delta^{[k-1]}(u_1 \odot \cdots \odot u_n) \not =0 $
only when $k=n$, in which case~(\ref{vedro})~gives
\[
\mu^{[k-1]}\circ \pi_1^{\ot k}\circ \delta^{[k-1]} (u_1 \odot \cdots \odot u_n) = 
\begin{cases}
{n!\ (u_1 \odot \cdots \odot u_n) }&{\hbox {if $n = k$ and}}
\\
0&{\hbox{otherwise.}}
\end{cases}
\]
This readily implies that $\exp(\pi_1) = \id_{S(U)}$ i.e.\
that $\un_{S(U)} =\pi_1 = \log(\id_{S(U)})$ as claimed. 

We recommend as an exercise to verify that  $\pi_1 =
\log(\id_{S(U)})$ directly. It turns out that this equation leads to an
interesting combinatorial formula for the unshuffles.
\end{example}

\begin{remark}[Geometric interpretation]
\label{prepsano_v_Tokyo}
  We can interpret a morphism in $\ttMV_R(V',V'')$ geometrically as a
  generalized, as in Remark~\ref{psano_v_St_Paule}, morphism $X'' \to
  X'$ of \MV-manifolds. Dually, we can think of it as a morphism
  ${X'^*} \to {X''^*}$ of dual \MV-manifolds.
\end{remark}

\begin{example}
Notice that if the reduced diagonal in $V'$ is trivial, 
\[
\exp(f) \equiv \sfe + f   \ \hbox { mod } V''\hot\, \frm,    
\hbox { and } \log(\sfe + g) =  g  \ \hbox { mod } V''\hot\, \frm.
\] 
The category $\ttMV_\bfk$ has a full subcategory consisting of
\MV-algebras over $\bfk$
with trivial reduced diagonals. The composition rule in this
subcategory is given by
\begin{align*}
f \diamond g & = \log(\exp (f) \circ \exp(g)) = 
\log ((\sfe + f) \circ (\sfe + g))
\\
& = \log(\sfe \circ \sfe + \sfe \circ g + f \circ \sfe + f \circ g) = 
 f \circ g +  \sfe \circ g + f \circ \sfe.
\end{align*}
We used the fact that $\sfe \circ \sfe = \sfe$ by~(\ref{eq:2}). It is
an instructive exercise to verify that the categorical
unit endomorphism is $\id - \sfe$. Notice that the `expected' unit $\id$
is not even an element of $\Lin_R^0(V,V)$. 

Restricting to an even smaller subcategory whose morphisms $f \in
\ttMV_{R} (V',V'')$ satisfy the stronger condition
\[
f\circ \eta' = 0, \ \epsilon'' \circ f = 0,
\]
the composition rule $f \diamond g$ becomes the standard composition
of morphisms.
\end{example}

\begin{example}
  The category $\ttMV_{\bfk}$ contains the subcategory $\L_\infty$
  whose objects are ${\rm L}_\infty$-algebras recalled in
  Example~\ref{uz_ctyri_hodiny} and morphisms are maps $f :S(U') \to
  S(U'')$ such that
\[
f(1) = 0, \ \Delta'' \circ \exp(f) = \exp(f) \circ \Delta', \ \hbox {
  and } \ \Im(f) \subset U''.
\]
Such a map automatically belongs to
$\Lin_{\bfk}^0\big(S(U'),S(U'')\big)$. We leave as an interesting
exercise to prove that
\[
\exp(f) : S(U') \to S(U'') 
\]
is the unique coextension of $f$ into a morphism of counital
coalgebras. We conclude that $\L_\infty$ is isomorphic to the category
of ${\rm L}_\infty$-algebras and their (weak) ${\rm
  L}_\infty$-mor\-phisms~\cite[Remark~5.3]{lada-markl:CommAlg95}.
\end{example}

\begin{example}
\label{ibl}
 Let us  consider
 $\IBL_\infty$-algebras
  recalled in Example~\ref{Treti_den_pekla} with
  $\bfk[[\hbar]]$-linear maps
\[
f : S(U')[[\hbar]] \to S(U'')[[\hbar]]
\]
of the form
\begin{equation}
\label{Jaruska_zitra_podepise_smlouvu}
f = f^{(1)} + \hbar f^{(2)} + \hbar^2 f^{(3)} + \cdots
\end{equation}
such that 
\begin{gather}
  \nonumber f^{(1)}(1) = 0, \quad \Delta'' \circ \exp(f) = \exp(f)
  \circ \Delta', \quad \hbox{ and }\\
\label{cl-cfl}
\bigoplus_{n > k} S^n(U') \subset \Ker (f^{(k)}).
\end{gather}
In Corollary~\ref{cor} below we prove that the above structure forms a
subcategory $\ttIBL_\infty$ of the category $\ttMV_{\bfk[[\hbar]]}$ of
MV-algebras over $\bfk[[h]]$, c.f.~also 
\cite[\S5]{cl}  and
\cite[Definition 2.8]{cfl}. One can consider a version of this
subcategory with the condition \eqref{cl-cfl} replaced with a ``dual''
condition:
\[
\Im(f^{(k)}) \subset \bigoplus_{1 \leq n \leq k}S^n(U'').
\]
This modified subcategory of $\ttMV_{\bfk[[\hbar]]}$ may be called
the category of $\IBL_\infty$-algebras in the sense
of~\cite[\S4.3]{munster-sachs}.
\end{example}

\begin{example}[Cieliebak-Latschev \cite{cl}]
\label{bv}
A $\BV_\infty$-mor\-phism from a $\BV_\infty$-algebra $(S(U), \Delta')$
of Example~\ref{Treti_den_pekla} to a $\BV_\infty$-algebra $(A,
\Delta'')$ of Example~\ref{sec:mv-algebras-1} is an \MV-mor\-phism given
by a $\bfk[[\hbar]]$-linear map
\[
f : S(U)[[\hbar]] \to A[[\hbar]]
\]
of the form
\[
f = f^{(1)} + \hbar f^{(2)} + \hbar^2 f^{(3)} + \cdots,
\]
such that 
\begin{gather*}
  \nonumber f^{(1)}(1) = 0, \quad \Delta'' \circ \exp(f) = \exp(f)
  \circ \Delta', \quad \hbox{ and }\\
\bigoplus_{n > k} S^n(U) \subset \Ker (f^{(k)}).
\end{gather*}
This is a generalization of the notion of an $\IBL_\infty$-mor\-phism of
the type \eqref{cl-cfl}.
\end{example}

We are going to define a product $V' \oslash V''$ of 
two \MV-algebras $V' =  (V',\mu',\delta',\Delta')$ and 
$V'' = (V'',\mu'',\delta'',\Delta'')$ over $R$
 as follows. Its underlying graded vector space is 
$V' \ot V''$ and the structure operator is
given as $\Delta' \ot_R \id + \id \ot_R \Delta''$. The multiplication
is defined in the standard way: 
\[
(v'_1 \ot v''_1) \cdot (v'_2 \ot v''_2)
:= (-1)^{\abs{v''_1}\abs{v'_2}} v'_1 \cdot v'_2 \ot v''_1 \cdot
v''_2, \ v_1',v_2' \in V' , \ v_1'',v_2'' \in V'',
\] 
with the unit given by 
the map $\eta' \ot \eta'' :\bfk \cong \bfk \ot \bfk \to V'
\ot V''$.  The comultiplication is
defined~as 
\[
\delta(v' \ot v'') := \tau_{23} \big(\delta'(v') \ot
\delta''(v'')\big), \ v' \in V',\ v'' \in V'',
\]
where $\tau_{23}$ permutes the second and the third
factors with the Koszul sign and, finally, 
$\epsilon' \ot \epsilon'' :V' \ot V'' \to
\bfk \ot \bfk \cong \bfk$ is the
counit. The $\oslash$-product of morphisms $f \in \MV_R(V'_1,V'_2)$
and $g \in \MV_R(V''_1,V''_2)$ is given by the formula
\[
f \oslash g := \log \big( \exp (f) \ot_R \exp(g) \big).
\]

\begin{proposition}
The $\oslash$-product equips $\ttMV_{R}$ 
with a symmetric monoidal structure.
\end{proposition}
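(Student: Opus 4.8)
The plan is to reduce everything to the symmetric monoidal category of graded $R$-modules by means of the dictionary $f \leftrightarrow \exp(f)$. Since $\exp$ is a bijection from $\Lin_R^0(V',V'')$ onto $\sfe + \Lin_R^0(V',V'')$ with inverse $\log$, and since $\exp(f \diamond g) = \exp(f) \circ \exp(g)$ and $\exp(\un_V) = \id_{V \hot R}$ by the very definitions of $\diamond$ and $\un_V$, the assignment $f \mapsto \exp(f)$ is an isomorphism (identical on objects) from $\ttMV_R$ onto the category $\mathcal E$ having the same objects but whose morphisms $V' \to V''$ are the ``actual'' intertwiners $F \in \sfe + \Lin_R^0(V',V'')$ with $\Delta'' \circ F = F \circ \Delta'$, composed by ordinary composition $\circ$. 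By the definition $f \oslash g := \log(\exp(f) \ot_R \exp(g))$, this isomorphism carries $\oslash$ to the graded tensor product $\ot_R$. It therefore suffices to equip $\mathcal E$ with the usual tensor-product symmetric monoidal structure of graded $R$-modules and to verify that all of its data live in $\mathcal E$; the structure then pulls back to $\ttMV_R$ automatically, with $\oslash$ playing the role of tensor.

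First I would verify that $V' \oslash V''$ is genuinely an \MV-algebra. The unital commutative algebra, the counital cocommutative coalgebra (with $\delta$ twisted by $\tau_{23}$), and the unit/counit compatibilities are the standard tensor-product constructions; conilpotency of $\delta$ follows from conilpotency of $\delta'$ and $\delta''$, since the reduced diagonal of the tensor coalgebra iterates into sums of iterates of $\bardelta'$ and $\bardelta''$. The one point requiring the graded sign calculus is $\Delta^2 = 0$ for $\Delta := \Delta' \ot_R \id + \id \ot_R \Delta''$: the pure terms vanish because $(\Delta')^2 = (\Delta'')^2 = 0$, while the two mixed terms cancel because $\Delta'$ and $\Delta''$ are both odd, so that $(\Delta' \ot \id)(\id \ot \Delta'') = - (\id \ot \Delta'')(\Delta' \ot \id)$ by the Koszul rule; moreover $\Delta(1) = 0$ since $\Delta'(1) = \Delta''(1) = 0$.

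Next I would establish bifunctoriality of $\oslash$. Well-definedness requires $\exp(f) \ot_R \exp(g) \in \sfe + \Lin_R^0$: this holds because $\exp(f)$ and $\exp(g)$ each fix their respective units modulo $\frm$, so that their tensor product agrees with $\sfe_{V'_1 \ot V''_1,\, V'_2 \ot V''_2} = \eta \circ \epsilon$ on $\eta$ modulo $V \hot \frm$; hence the difference lies in $\Lin_R^0$ and $\log$ converges. The identities $(f_1 \oslash g_1) \diamond (f_2 \oslash g_2) = (f_1 \diamond f_2) \oslash (g_1 \diamond g_2)$ and $\un_{V'} \oslash \un_{V''} = \un_{V' \oslash V''}$ then follow by applying $\exp$ and using that all the maps in sight are of degree $0$, so that $(\exp f_1 \ot \exp g_1) \circ (\exp f_2 \ot \exp g_2) = (\exp f_1 \circ \exp f_2) \ot (\exp g_1 \circ \exp g_2)$ with no Koszul sign, together with $\exp(\un) = \id$ and $\id \ot \id = \id$.

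Finally I would supply the coherence data. The unit object is $\bfk$, regarded as an \MV-algebra with $\Delta = 0$, for which $V \oslash \bfk$ has structure operator $\Delta_V \ot \id + \id \ot 0$ matching $\Delta_V$ under $V \ot \bfk \cong V$. The associator, the two unitors, and the braiding are taken to be $\log$ of the standard reassociation, the canonical isomorphisms $\bfk \ot V \cong V \cong V \ot \bfk$, and the Koszul-signed flip $\tau$, respectively. Each of these standard linear isomorphisms fixes the unit, so its difference from $\sfe$ vanishes on $\eta$ and thus lies in $\Lin_R^0$, making the $\log$ defined; and each commutes on the nose with the componentwise operators $\Delta$ on the relevant tensor products, hence is a genuine $\mathcal E$-morphism. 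Naturality of these constraints, the pentagon, triangle, and hexagon identities, and the symmetry $\beta \diamond \beta = \un$ all translate under $\exp$ into the corresponding classical identities for the symmetric monoidal category of graded $R$-modules, which hold. I expect the only place demanding real care, the main (though routine) obstacle, to be the graded sign bookkeeping: checking $\Delta^2 = 0$ on the product and verifying that the Koszul braiding simultaneously intertwines the product differentials, satisfies the hexagons, and squares to the identity.
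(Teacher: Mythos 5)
Your proposal is correct, and it is more informative than the paper's own proof, which consists of the two words ``Direct verification.'' What you do differently is to refuse to verify anything in the $\diamond$-world at all: you push the entire problem through the bijection $f \mapsto \exp(f)$ into the category whose morphisms are maps $\varphi \in \sfe + \Lin_R^0$ intertwining the $\Delta$'s under \emph{ordinary} composition, where $\oslash$ becomes the literal graded tensor product $\ot_R$ and all coherence data (associator, unitors, Koszul braiding) are the classical ones for graded $R$-modules. This is exactly the content of the paper's Proposition~\ref{Stihnu_vse_pred_odletem_do_Prahy?} (the isomorphism $\ttMV_R \cong \widetilde{\ttMV}_R$), which in the paper appears only \emph{after} the monoidal-structure proposition; your argument in effect reorders the paper's logic so that the ``direct verification'' reduces to standard sign bookkeeping: $(\Delta'\ot\id + \id\ot\Delta'')^2 = 0$ by oddness, conilpotency of the tensor coalgebra, well-definedness of $\log$ on maps fixing units modulo $V \hot \frm$, and the classical pentagon/hexagon/triangle identities. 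What the transfer buys is that functoriality of $\oslash$ and all coherence axioms become one-line consequences of $\exp(f \diamond g) = \exp(f)\circ\exp(g)$ and the interchange law $(\exp f_1 \ot \exp g_1)\circ(\exp f_2 \ot \exp g_2) = (\exp f_1 \circ \exp f_2)\ot(\exp g_1 \circ \exp g_2)$ for degree-zero maps, whereas a verification carried out natively in $\ttMV_R$ would require manipulating expressions of the form $\log(\exp(\cdot)\circ\exp(\cdot))$ throughout. The only caveat, which affects the paper equally, is the silent identification of $(V'\ot V'')\hot R$ with the completed $R$-tensor product of $V'\hot R$ and $V''\hot R$ needed to make sense of $\Delta'\ot_R\id + \id\ot_R\Delta''$; this is harmless for complete local Noetherian $R$ but deserves the one sentence neither you nor the authors spend on it.
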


\begin{proof}
Direct verification.
\end{proof}

\begin{proposition}
\label{Stihnu_vse_pred_odletem_do_Prahy?}
  The category $\ttMV_{R}$ of \MV-algebras over $R$ is isomorphic to
  the category $\widetilde{\ttMV}_{R}$ with the same objects and
  morphisms
\[
\widetilde{\ttMV}_{R}(V', V'') := \big\{\varphi \in \Lin_R(V', V'')\ | \
\varphi \circ
  \eta' \equiv \eta'' \!\! \mod V'' \hot\, \frm \
\mbox { and }\ \Delta'' \circ \varphi
  = \varphi \circ \Delta'\big\}.
\]
The categorical composition is the usual composition
  of maps, and the unit ${\tt 1}_V \in  \widetilde{\ttMV}_{R}(V, V)$
  is the identity
  $\id : V \to V$.
\end{proposition}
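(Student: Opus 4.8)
The plan is to define a functor $F\colon \ttMV_R \to \widetilde{\ttMV}_R$ that is the identity on objects and sends a morphism $f\in\ttMV_R(V',V'')$ to $\exp(f)$, and to exhibit its inverse on morphisms as $\varphi\mapsto\log(\varphi)$. The point is that the $\diamond$-composition was rigged precisely so that $\exp$ intertwines it with ordinary composition of maps; thus the whole statement reduces to transporting the categorical structure along the bijection $\exp$, together with the bookkeeping identities~\eqref{eq:2} and the mutual-inverse relations $\log(\exp(f))=f$ and $\exp(\log(\sfe+g))=\sfe+g$ already recorded in Section~\ref{sec:morphisms}.

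First I would check that $F$ lands in the correct morphism sets. Fix $f\in\ttMV_R(V',V'')$ and put $\varphi:=\exp(f)$. The compatibility $\Delta''\circ\varphi=\varphi\circ\Delta'$ required of a morphism in $\widetilde{\ttMV}_R$ is then nothing but the defining equation $\Delta''\circ\exp(f)=\exp(f)\circ\Delta'$ of $f$. For the remaining condition, write $\varphi=\sfe+g$ with $g:=\exp(f)-\sfe\in\Lin_R^0(V',V'')$; the computation $(\sfe+g)\circ\eta'\equiv\eta''\mod V''\hot\,\frm$ performed just before the statement (which uses~\eqref{eq:2}) gives exactly $\varphi\circ\eta'\equiv\eta''\mod V''\hot\,\frm$. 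Hence $\exp(f)\in\widetilde{\ttMV}_R(V',V'')$.

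Next I would construct the inverse on morphisms. Given $\varphi\in\widetilde{\ttMV}_R(V',V'')$, the unit condition combined with $\sfe\circ\eta'=\eta''$ from~\eqref{eq:2} shows $\Im\big((\varphi-\sfe)\circ\eta'\big)\subset V''\hot\,\frm$, so $\varphi-\sfe\in\Lin_R^0(V',V'')$ and $f:=\log(\varphi)$ is defined. The relations $\log(\exp(f))=f$ and $\exp(\log(\sfe+g))=\sfe+g$ make $f\mapsto\exp(f)$ and $\varphi\mapsto\log(\varphi)$ mutually inverse bijections between $\Lin_R^0(V',V'')$ and $\{\varphi\in\Lin_R(V',V'')\mid\varphi\circ\eta'\equiv\eta''\mod V''\hot\,\frm\}$. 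Since $f=\log(\varphi)$ satisfies the $\Delta$-compatibility if and only if $\varphi$ does, this restricts to a bijection between $\ttMV_R(V',V'')$ and $\widetilde{\ttMV}_R(V',V'')$.

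Finally, functoriality. On objects $F$ is the identity. For morphisms, $\exp(f\diamond g)=\exp\big(\log(\exp(f)\circ\exp(g))\big)=\exp(f)\circ\exp(g)$, i.e. $F(f\diamond g)=F(f)\circ F(g)$ with ordinary composition on the right; and $F(\un_V)=\exp(\log(\id_{V\hot R}))=\id_{V\hot R}$, the unit of $\widetilde{\ttMV}_R$. The one genuinely nonformal point, and the main obstacle I foresee, is to justify that the logarithm hidden inside $\diamond$ converges, i.e. that $\exp(f)\circ\exp(g)-\sfe\in\Lin_R^0$: expanding $(\sfe+a)\circ(\sfe+b)$ with $a,b\in\Lin_R^0$ and using $\sfe\circ\sfe=\sfe$ and $\sfe\circ\eta'=\eta''$ from~\eqref{eq:2}, together with the fact that left composition by any continuous $R$-linear map sends $\Lin_R^0$ into $\Lin_R^0$, shows that each of the summands $\sfe\circ b$, $a\circ\sfe$, $a\circ b$ lies in $\Lin_R^0$, as needed. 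Everything else is a direct translation through $\exp$.
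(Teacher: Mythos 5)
Your proposal is correct and follows essentially the same route as the paper's proof: mutually inverse functors that are the identity on objects and act by $\exp$ and $\log$ on morphisms, with the unit conditions matched up via the identities~\eqref{eq:2}. You merely spell out details the paper dismisses as ``easy to verify'' and ``clear,'' including the worthwhile observation that $\exp(f)\circ\exp(g)-\sfe\in\Lin_R^0$, which justifies the convergence of the logarithm inside~$\diamond$.
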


\begin{proof}
  The isomorphism between $\ttMV_{R}$ and $\widetilde{\ttMV}_{R}$ 
is given by the mutually inverse functors identical
  on objects and taking a morphism $f \in \ttMV_R(V',V'')$ to $\exp
  (f) \in \widetilde{\ttMV}_{R}(V',V'')$ and 
$\varphi \in \widetilde{\ttMV}_{R}(V',V'')$ to $\log(\varphi) \in 
\ttMV_R(V',V'')$.  Let us check that this isomorphism is
  well-defined.
  
Notice first that for $\varphi \in \Lin_R(V', V'')$, the condition
$\varphi \circ \eta' \equiv \eta'' \mod V'' \hot\, \frm$ is equivalent
to the condition $(\varphi - {\tt e})\circ \eta' \equiv 0 \mod V''
\hot\, \frm$. To see it, recall that $ {\tt e} = \eta''
\circ \epsilon '$,~thus
\[
(\varphi - {\tt e})\circ \eta' = \varphi\circ \eta' - \eta'' \circ
\epsilon ' \circ \eta' =  \varphi\circ \eta' - \eta''.
\]
Likewise, for  $f \in \Lin_R(V', V'')$,  $f \circ
  \eta' \cong 0 \mod V'' \hot\, \frm$ is equivalent to  
$(f + {\tt e})\circ \eta'  \equiv \eta'' \mod V'' \hot\,  \frm$.

It is now easy to verify, using the definitions of $\exp$ and $\log$, 
that $\exp (f)$ indeed belongs to $\widetilde{\ttMV}_{R}(V', V'')$ and
$\log(\varphi)$ to  $\ttMV_R(V', V'')$. The fact that the above
correspondence converts the $\diamond$-composition to the usual one is clear.
\end{proof}

\def\chain{{\tt Chn}_R^\circ}
\begin{corollary}
  The category $\ttMV_R$ of \MV-algebras over $R$ is equivalent to the
  category $\chain$ of \emph{pointed complexes} over $R$ whose objects
  are graded vector spaces $V$ with a continuous degree $+1$
  $R$-linear differential $\Delta: V \hot R \to V \hot R$ and a
  $\bfk$-linear monomorphism $\eta: \bfk \to V$ such that
  $\Delta\circ \eta = 0$. A morphism between $V'$ and $V''$ is a chain
  map $\varphi \in \Lin_R(V', V'')$ such that
  $\varphi \circ \eta' \equiv \eta'' \mod V'' \hot \frm$.
\end{corollary}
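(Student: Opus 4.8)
The plan is to factor the equivalence through the isomorphism already established in Proposition~\ref{Stihnu_vse_pred_odletem_do_Prahy?}. Since $\ttMV_R \cong \widetilde{\ttMV}_R$, it suffices to produce an equivalence between $\widetilde{\ttMV}_R$ and $\chain$. The natural candidate is the forgetful functor $\mathcal{U} : \widetilde{\ttMV}_R \to \chain$ that sends an MV-algebra $(V,\mu,\delta,\Delta)$ to the underlying pointed complex $(V,\Delta,\eta)$, where $\eta$ is the unit, and that acts as the identity on underlying $R$-linear maps. I would first check $\mathcal{U}$ is well-defined on objects and morphisms: the axioms $\Delta^2 = 0$ and $\Delta(1)=0$ give $\Delta \circ \eta = 0$, so the target is genuinely an object of $\chain$; and a morphism $\varphi \in \widetilde{\ttMV}_R(V',V'')$ is by definition an $R$-linear chain map satisfying $\varphi \circ \eta' \equiv \eta'' \mod V'' \hot \frm$, which is precisely the defining condition for a morphism in $\chain$.

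Full faithfulness is then immediate and requires no computation: by the very definitions, $\widetilde{\ttMV}_R(V',V'')$ and $\chain(\mathcal{U}V', \mathcal{U}V'')$ are the same subset of $\Lin_R(V',V'')$, cut out by the identical pair of conditions (a $\Delta$-commuting chain map, together with the unit condition modulo $\frm$), and composition in both categories is ordinary composition of maps. Hence $\mathcal{U}$ induces a bijection on every Hom-set.

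The remaining and principal point is essential surjectivity, i.e.\ realizing every pointed complex as the image of an MV-algebra. Given $(V,\Delta,\eta) \in \chain$, the pointing $\eta : \bfk \to V$ is a split monomorphism of graded vector spaces, so I may choose a degree-zero $\bfk$-linear retraction $\epsilon : V \to \bfk$ with $\epsilon \circ \eta = \id$. Example~\ref{supertrivial} then equips $V$ with the supertrivial unital-algebra and counital-coalgebra structures $\mu,\delta$ determined by $\eta$ and $\epsilon$, and since $\Delta^2 = 0$ and $\Delta \circ \eta = 0$ hold by hypothesis, the quadruple $(V,\mu,\delta,\Delta)$ is an MV-algebra. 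By construction $\mathcal{U}$ carries it back to the original pointed complex $(V,\Delta,\eta)$, so $\mathcal{U}$ is in fact surjective on objects, hence essentially surjective.

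A fully faithful, essentially surjective functor is an equivalence, and combining this with the isomorphism $\ttMV_R \cong \widetilde{\ttMV}_R$ of Proposition~\ref{Stihnu_vse_pred_odletem_do_Prahy?} yields the claimed equivalence $\ttMV_R \simeq \chain$. The only delicate step is the choice of the splitting $\epsilon$ in the realization; I expect this to be the main (though mild) obstacle, and it is harmless precisely because $\mathcal{U}$ discards $\mu$, $\delta$, and $\epsilon$: different choices produce distinct MV-algebras with the same image, and it is this many-to-one behaviour on objects that makes $\mathcal{U}$ an equivalence rather than an isomorphism.
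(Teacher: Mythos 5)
Your proposal is correct and follows essentially the same route as the paper: reduce via Proposition~\ref{Stihnu_vse_pred_odletem_do_Prahy?} to comparing $\widetilde{\ttMV}_{R}$ with $\chain$, take the forgetful functor (identity on morphisms, with Hom-sets literally coinciding), and realize any pointed complex by choosing a retraction $\epsilon$ of $\eta$ and invoking the supertrivial \MV-algebra of Example~\ref{supertrivial}. The only packaging difference is that you verify full faithfulness plus essential surjectivity, whereas the paper directly exhibits the same supertrivial construction as a weak inverse functor; these are interchangeable, and your choice-of-splitting caveat is exactly the choice the paper's functor $F$ also makes.
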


\begin{proof}
By Proposition~\ref{Stihnu_vse_pred_odletem_do_Prahy?}, it is enough
to establish an equivalence between the categories
$\widetilde{\ttMV}_{R}$ and $\chain$.
Let us construct mutual weak inverses $\Box : \widetilde{\ttMV}_{R}
\to \chain$ and $F : \chain \to \widetilde{\ttMV}_{R}$.

On objects, the functor $\Box$ forgets everything except 
the structure operator $\Delta$ and
the unit map $\eta$.  For $(V,\Delta,\eta)
\in \chain$ choose a right inverse $\epsilon$ of $\eta$ and define
$F(V,\Delta,\eta)$ the supertrivial MV-algebra as in
Example~\ref{supertrivial}. 
Notice that, for $V',V'' \in \widetilde{\ttMV}_{R}$,
\[
\chain( \Box V', \Box V'') = \widetilde{\ttMV}_{R}(V',V'')
\]
and, likewise, for $V',V'' \in \chain$,
\[
 \widetilde{\ttMV}_{R}(F V', F V'') =\chain(V',V'').
\]
We define $\Box$ and $F$ to be the identities on morphisms. It is
simple to verify that we have constructed mutual weak inverses.
\end{proof}

\begin{remark}
\label{sec:morphisms-1}
The definition of the category of \MV-algebras can be modified. For
instance, we may leave the conilpotency of $\delta$ out, but instead
of~(\ref{eq:1}) require that $\Im(f) \in V'' \hot\, \frm$. Likewise, we
need not require $R$ to be complete, but then~(\ref{eq:1}) must be
replaced by $f \circ \eta' = 0$.  In both cases the above
constructions remain valid.
We may also require $\epsilon \circ \Delta = 0$, drop the condition
$\Delta(1)=0$, or require both conditions simultaneously.
We may
allow $R$ to be differential graded, which could be useful in some contexts.
\end{remark}

\section{Generalizations to other algebra types} 

MV-algebras were defined as spaces that are simultaneously commutative 
associative algebras and  cocommutative 
coassociative coalgebras. In this mildly speculative section we
discuss possible generalizations to
structures  other  than commutative associative (co)-algebras.
We will assume basic
knowledge of operads as it can
be gained for example from~\cite{markl-shnider-stasheff:book}. 

As preparation we view the
exponential~(\ref{Zitra_budeme_s_Jaruskou_pit_portske.}) from a
different angle. Namely, we describe the  isomorphism
\[
\exp - \ \sfe :  \Lin^0_R\big(V',V''\big) 
\stackrel\cong\longrightarrow  \Lin^0_R\big(V',V''\big),
\]
which was the core of our construction, in terms of universal algebra, 
assuming for simplicity that $R = \bfk$. Let us denote by
$\cS(V'')$ the symmetric algebra $S(V'')$ considered as a
coalgebra with the standard coalgebra structure. Since 
$\cS(V'')$ with the projection $\cS(V'') \to V''$ realizes
the cofree conilpotent coassociative cocommutative
coalgebra cogenerated by $V''$ 
\cite[Example~II.3.79]{markl-shnider-stasheff:book}, each  $f : V' \to V'' \in
\Lin^0_\bfk\big(V',V''\big)$ uniquely coextends to a coalgebra map
$u_f : V' \to \cS(V'')$.\footnote{Here our assumption of the
  conilpotency of $V'$ resurfaces again.} 

On the other hand, the  multiplication of $V''$ determines a
linear map \hbox{$m :  S(V'') \to V''$}. Expressing the coextension
$u_f : V' \to \cS(V'')$ using e.g.~formula~(3.66) in Section II.3.7 of 
\cite{markl-shnider-stasheff:book} with $\oP$ the operad for
commutative associative algebras,  we easily see that
$\exp(f)\!  - \sfe$ equals the
composition 
\begin{equation}
\label{Vcera_jsem_lyzoval_na_rozmoklem_snehu_v_Ratajich.}
V' \stackrel{u_f} \longrightarrow \cS(V'') 
\stackrel{{\rm can}}\longrightarrow S(V'') \stackrel m\longrightarrow V''
\end{equation}
in which
\[
{\rm
  can} : \cS(V'') \stackrel \cong\longrightarrow S(V'')
\]
is the identity
of the underlying graded vector spaces.

Let us try to generalize the
composed map~(\ref{Vcera_jsem_lyzoval_na_rozmoklem_snehu_v_Ratajich.})
to the case when  $V'$ is a $\oP$-coalgebra
and $V''$ a $\oQ$-algebra, for some $\bfk$-linear 
operads  $\oP$ and $\oQ$. We may assume from the very beginning that
$\oP$ has finite-dimensional pieces, as most operads relevant for
physical applications have this property.  We
certainly have again the  canonical morphisms $u_f$ and $m$ in the sequence
\begin{equation}
\label{dvakrat_za_sebou}
V' \stackrel{u_f} \longrightarrow \cF(V'') 
\stackrel{{?}}\longrightarrow \Fr(V'') \stackrel m\longrightarrow V''
\end{equation}
in which $\cF(V'')$ is the cofree
conilpotent $\oP$-coalgebra on $V''$ and $\Fr(V'')$ the free
$\oQ$-algebra on $V''$. The only datum which is not automatic
is an isomorphism 
\begin{equation}
\label{otaznik}
? :
\cF(V'') \longrightarrow  \Fr(V'').
\end{equation}
Its existence must therefore be
accepted as an assumption, i.e.~the operads $\oP$ and $\oQ$  
must be such that the graded spaces 
$\cF(V'')$ and $
\Fr(V'')$ are isomorphic via an isomorphism natural in $V''$.

To formulate this assumption solely in terms of the operads $\oP$
and $\oQ$, we invoke from \cite[Definitions~II.1.24 and
II.3.74]{markl-shnider-stasheff:book} the formulas
\[
\cF(V'') = \bigoplus_{n\geq 1} \big(\oP(n)^*\otimes {V''}^{\otimes
  n}\big)^{\Sigma_n}
\ \hbox { and } 
\
\Fr(V'') = \bigoplus_{n\geq 1} \oQ(n)\otimes_{\Sigma_n} {V''}^{\otimes
  n},
\]
where $\oP(n)^*$ is the linear dual of the vector space  $\oP(n)$.
It is easy to see now that if a functorial isomorphism 
in~(\ref{otaznik}) exists  then one has for each $n \geq 1$ an
isomorphism
\begin{equation}
\label{Univime}
\oP(n)^* \cong \oQ(n).
\end{equation}
It must moreover, very crucially, be `nice' and
explicit enough\footnote{`Nice'  means in particular that the
  isomorphism explicitely relates the cooperad structure of $\oP^*$
  with the operad structure of $\oQ$. Paragraph~2.5
of \cite{markl:ab} shall give a more
concrete idea what we mean by it  when $\oP
= \oQ = \Lie$, the operad for Lie algebras.} 
so that we could express the
composition~(\ref{dvakrat_za_sebou})
by a formula involving the convolution product in
$\Lin^0_\bfk\big(V',V''\big)$. 

The existence of~(\ref{Univime}) is already very restrictive. Since we
assumed that the pieces of the operad
$\oP$ are finite-dimensional, it implies that $\oP(n) \cong \oQ(n)$ for
each $n$, so the generating series of the operads $\oP$ and $\oQ$ are
the same. We do not know about any couple of different operads
relevant for applications with the same generating series. We are thus led
to the assumption $\oP = \oQ$, supported by the
natural requirement of  essential self-duality of the definition of
MV-algebras.  

Finding interesting operads $\oP$ admitting a nice
  isomorphisms  $\oP(n)^* \cong \oP(n)$, $n \geq 1$, is however a
  difficult task. For
instance, an explicit canonical isomorphism
\[
\Lie(n)^* \cong \Lie(n) 
\]
for the operad $\Lie$ governing Lie
algebras
is known only for small $n$, and finding one is closely related to the
problem of Eulerian idempotents, see the discussion in \S2.5
and Remark~2.9 of \cite{markl:ab}.

On the other hand, a nice canonical isomorphism as in~(\ref{Univime})
always exists when $\oP = \oQ$ are
$\bfk$-linearizations of an operad $\mathbf p$ 
defined in the category of sets, as
then $\oP(n)$ has for each $n \geq 1$ a canonical basis spanned by
the elements of ${\mathbf p}(n)$. There are two prominent examples of
this situation. The first one is $\oP = \oQ = \Com$, the operad for
commutative associative algebras which is the linearization of the
terminal set-operad. The corresponding MV-algebras are the ones
discussed in this paper.

The second outstanding example is $\oP = \oQ = \Ass$, the operad for
associative algebras which is the linearization of the terminal
non-$\Sigma$ set-operad. The corresponding theory should be that of an 
$A_\infty$-version of MV-algebras. We expect that it has a similar
flavor as the commutative one, with the notable difference that
the exponential~(\ref{Zitra_budeme_s_Jaruskou_pit_portske.}) shall be
replaced by the series
\[
\sfe + f + f^2 + f^3 + \cdots  = (\sfe - f)^{-1}
\]
and the logarithm by its functional inverse
$(g - \sfe) g^{-1}$.

Let us close this section by a remark about the convolution
product. In general it equips, 
for $V'$ a $\oP$-coalgebra and $V''$ a $\oQ$-algebra, 
$\Lin^0_R\big(V',V''\big)$ only with a structure of a $(\oP \otimes
\oQ)$-algebra. A special feature of the cases  $(\oP,\oQ) =
(\Com,\Com)$ or $(\Ass,\Ass)$ is that both $\Com$ and $\Ass$ are Hopf
operads \cite[Definition~II.3.135]{markl-shnider-stasheff:book}, i.e.\ the ones equipped with the diagonals
\[
\Com \longrightarrow \Com \ot \Com \ \hbox { and } \ \Ass \longrightarrow \Ass
\ot \Ass,
\] 
which make the space  $\Lin^0_R\big(V',V''\big)$ actually a
commutative associative algebra, respectively an associative
algebra. Since each operad which is a linearization of a set-theoretic
one is a Hopf operad~\cite[Proposition~11]{markl-remm:JA06}, such a property of the convolution product
holds for all operads of this type.

We conclude that sensible generalizations of MV-algebras
may exist for couples of the form $(\oP,\oP)$, where $\oP$ is a
linearization of a set-theoretic operad.  We however think that
working out the details  would make sense 
only when a relevant motivating example appears.  

\section{A composition formula and $\IBL_\infty$-algebras}

Let us consider morphisms
$g \in \Lin^0_R\big(S(U'), S(U'')\big)$ and 
$f \in \Lin^0_R\big(S(U''),S(U''')\big)$, where $S(U'), S(U'')$ and $S(U''')$ 
are symmetric algebras with the standard coalgebra structures. The aim
of this section is to give an explicit formula for 
\begin{equation}
\label{Jaruska_mi_poslala_Mikulase}
f \diamond g  = \log\big(\exp (f) \circ \exp(g)\big) \in 
\Lin^0_R\big(S(U'),S(U''')\big).
\end{equation}
Further, using this formula, we prove that $\IBL_\infty$-algebras with
morphisms~(\ref{Jaruska_zitra_podepise_smlouvu}) form a subcategory of
$\ttMV_{\bfk[[\hbar]]}$.

\def\epi{\twoheadrightarrow}
\def\vlra{{\hbox{$-\hskip-1mm-\hskip-2mm\longrightarrow$}}}
Let us formulate some preparatory observations.  
Each $R$-linear map
\[
\hbox{$h : S(V')\hot R \to S(V'')\hot R$}
\] 
determines a family
\[
h^m_n : S^n(V') \to S^m(V'')\hot R,\ m,n \geq 0,
\]
with $h^n_m$ the composition 
\[
S^n(V') \hookrightarrow S(V') 
\stackrel{ h|_{S(V')}}\vlra S(V'') \hot R \epi
S^n(V') \hot R ,\footnote{We use the convention that $S^0(V') =
  S^0(V'') = \bfk$.}
\]
where $\hookrightarrow$ resp.~$\epi$ is the canonical inclusion resp.~
the canonical projection. Vice versa, each family $\{h^m_n\}_{m,n \geq
  0}$ as above such that the sum
\[
h|_{S(V')}(x)
:=\sum_{m \geq 0} h^m_n(x) 
\]
converges in $S(V'')\hot R$ for each fixed $n \geq 0$ and 
$x \in S^n(V')$, determines an $R$-linear map $h : S(V')\hot R \to S(V'')\hot
R$.
\def\flicek#1#2{({}^{#2}_{#1})}
We will call $h^m_n$ the
{\em $\flicek {{\, n}}m$-component\/} of $h$. 
We will describe $f \diamond g$ in terms of its $\flicek {{\, n}}m$-components.

For natural numbers $k,l$ and \nni{}s 
$r,s_1,\ldots,s_l$, $j_1,\ldots,j_k $ such that
\[
j_1 + \cdots + j_k = s_1 + \cdots  + s_l = r
\]
we define
\begin{equation}
\label{beh}
\Psi_{j_1,\ldots,j_k}^{s_1,\ldots,s_l} : S^{j_1}(U'') \ot \cdots \ot
S^{j_k}(U'')
\longrightarrow  S^{s_1}(U'') \ot \cdots \ot
S^{s_l}(U'')
\end{equation}
to be the $\bfk$-linear map that sends
\[
(u''_1 \odot \cdots \odot
u''_{j_1}) \ot \cdots
\ot (u''_{r-j_k+1} \odot \cdots \odot
u''_{r}) \in S^{j_1}(U'') \ot \cdots \ot
S^{j_k}(U'')
\]
into the sum
\begin{equation}
\label{foudling}
\sum_{\kappa \in \unsh_{s_1,\ldots,s_l}}\epsilon(\kappa)
(u''_{\kappa(1)} \odot \cdots \odot
u''_{\kappa(s_1)}) \ot \cdots
\ot (u''_{\kappa(r-s_l+1)} \odot \cdots \odot
u''_{\kappa(r)}) \in  S^{s_1}(U'') \ot \cdots \ot
S^{s_l}(U'')
\end{equation}
over the set $\unsh_{s_1,\ldots,s_l}$ of all $(s_1,\ldots,s_l)$-unshuffles
$\kappa$ of $r = s_1+\cdots + s_l$ elements. It is easy to check
that~(\ref{foudling}) is well-defined, i.e.\
that~(\ref{foudling}) is invariant under permutations of generators 
inside the
groups 
\begin{equation}
\label{v_pracovne_v_Minneapolis}
\{u''_1,\ldots,u''_{j_1}\},\ldots,\{u''_{r-j_k+1},\ldots,u''_{r}\}.
\end{equation}

We associate to each $\kappa \in \unsh_{s_1,\ldots,s_l}$
in~(\ref{foudling}) a graph $\Gamma$ with two types of vertices, the
upper and lower ones. The upper ones are labelled by $1,\ldots,l$, the
lower ones by $1,\ldots,k$.  The upper vertex labelled by $b \in
\{1,\ldots,l\}$ is connected to the lower vertex labelled by $a\in
\{1,\ldots,k\}$ if and only if $\kappa$ maps some element of the $b$th
segment of the decomposition
\[
\{1,\ldots,r\} = \{1,\ldots,s_1\} \cup \cdots \cup \{r-s_l+1,\ldots,r\}
\]
to some element of the $a$th segment of the decomposition
\[
\{1,\ldots,r\} = \{1,\ldots,j_1\} \cup \cdots \cup \{r-j_k+1,\ldots,r\}.
\]

Let $c(\kappa)$ be the number of connected components of $\Gamma$. We
will call $c(\kappa)$ the {\em connectivity\/} of $\kappa$ and say
that~$\kappa$ is {\em connected\/} if $c(\kappa)=1$.  Denote by
${}^c\Sigma^{j_1,\ldots,j_k}_{s_1,\ldots,s_l}$ the subset of all $\kappa
\in \unsh_{s_1,\ldots,s_l}$ of connectivity  $c$ and let finally
\[
{}^c\Psi_{j_1,\ldots,j_k}^{s_1,\ldots,s_l}: S^{j_1}(U'') \ot \cdots \ot
S^{j_k}(U'')
\to  S^{s_1}(U'') \ot \cdots \ot
S^{s_l}(U'')
\]
be the map defined as $\Psi_{j_1,\ldots,j_k}^{s_1,\ldots,s_l}$ but 
with the sum~(\ref{foudling}) restricted to connected
$\kappa \in{}^c\Sigma^{j_1,\ldots,j_k}_{s_1,\ldots,s_l}$.

\begin{proposition}
The maps
${}^c\Psi_{j_1,\ldots,j_k}^{s_1,\ldots,s_l}$, $c \geq 1$, are
well-defined and 
\begin{equation}
\label{dnes}
\Psi_{j_1,\ldots,j_k}^{s_1,\ldots,s_l} =
{}^1\Psi_{j_1,\ldots,j_k}^{s_1,\ldots,s_l} +
{}^2\Psi_{j_1,\ldots,j_k}^{s_1,\ldots,s_l} +  
{}^3\Psi_{j_1,\ldots,j_k}^{s_1,\ldots,s_l}+\cdots.
\end{equation}
\end{proposition}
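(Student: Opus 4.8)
The plan is to deduce the well-definedness of each ${}^c\Psi_{j_1,\ldots,j_k}^{s_1,\ldots,s_l}$ from that of the full map $\Psi_{j_1,\ldots,j_k}^{s_1,\ldots,s_l}$, which was granted just before the Proposition, by verifying that the reorganization of the sum \eqref{foudling} witnessing the latter \emph{preserves the connectivity} $c(\kappa)$ of each unshuffle. Once this is in hand, the decomposition \eqref{dnes} will be a formal consequence.

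First I would make the notion of well-definedness precise. Write $J_a$ for the block of indices carried by the $a$th domain factor $S^{j_a}(U'')$ and $S_b$ for the block of positions cut out by the $b$th codomain factor $S^{s_b}(U'')$. The formula \eqref{foudling} is a priori a map out of $(U'')^{\ot r}$, and to descend to $S^{j_1}(U'')\ot\cdots\ot S^{j_k}(U'')$ one must check invariance, up to Koszul sign, under permutations of the generators $u''_i$ inside each group \eqref{v_pracovne_v_Minneapolis}, i.e.\ inside each $J_a$. For the full sum this is the content of the sentence preceding the Proposition, and it is proved by a sign-preserving bijection $\kappa \mapsto \kappa'$ of $\Sigma_{s_1,\ldots,s_l}$ attached to a transposition $\tau$ of two generators lying in a common block $J_a$: such a transposition replaces each summand by $\pm$ the summand indexed by the unshuffle $\kappa'$ obtained from $\tau\circ\kappa$ after reordering the outputs inside each $S_b$, which is legitimate because the $b$th output factor is a symmetric power. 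Here $\tau^2=\id$ makes $\kappa\mapsto\kappa'$ a genuine involutive bijection.

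The key step is the lemma $c(\kappa')=c(\kappa)$, which I would establish by showing that the bipartite graph $\Gamma$ associated with $\kappa$ is literally unchanged. The edge joining the upper vertex $b$ to the lower vertex $a'$ records only whether $\kappa$ sends some position of $S_b$ to some value of $J_{a'}$, so it depends on $\kappa$ solely through the assignment ``block of a position $\mapsto$ block of its value.'' Precomposing with $\tau$ moves values only within the single block $J_a$, hence $\kappa(q)$ and $(\tau\kappa)(q)$ lie in the same $J$-block for every position $q$; and reordering positions inside a block $S_b$ leaves the set of values $\{\kappa(q)\,:\,q\in S_b\}$ unchanged. Neither operation alters any edge of $\Gamma$, so $c(\kappa')=c(\kappa)$.

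It follows that the bijection $\kappa\mapsto\kappa'$ restricts, for each $c$, to a bijection of ${}^c\Sigma^{j_1,\ldots,j_k}_{s_1,\ldots,s_l}$ onto itself; running the same sign bookkeeping on this subsum shows that ${}^c\Psi_{j_1,\ldots,j_k}^{s_1,\ldots,s_l}$ is invariant under the transpositions that generate the within-block symmetric group actions, hence well-defined on $S^{j_1}(U'')\ot\cdots\ot S^{j_k}(U'')$. Finally, since every unshuffle has connectivity $c(\kappa)\geq 1$ and $\Sigma_{s_1,\ldots,s_l}$ is the disjoint union of the sets ${}^c\Sigma^{j_1,\ldots,j_k}_{s_1,\ldots,s_l}$ over $c\geq 1$, splitting \eqref{foudling} along this partition yields \eqref{dnes}; the sum is finite because the connectivity is bounded above (indeed $c(\kappa)\le\min(k,l)$ when all blocks are non-empty). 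I expect the connectivity-preservation lemma to be the only real obstacle: the partition, the finiteness, and the sign bookkeeping are then routine.
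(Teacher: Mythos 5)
Your proposal is correct and follows the same route as the paper: the paper's (two-sentence) proof likewise observes that composing with permutations inside the groups \eqref{v_pracovne_v_Minneapolis} leaves the connectivity of $\kappa$ unchanged, so each ${}^c\Psi_{j_1,\ldots,j_k}^{s_1,\ldots,s_l}$ inherits well-definedness, and then notes that \eqref{dnes} follows from partitioning the unshuffles by connectivity. Your write-up simply fills in the details (the within-block bijection $\kappa\mapsto\kappa'$ and the invariance of the bipartite graph $\Gamma$) that the paper leaves implicit.
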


\begin{proof}
Since composing with permutations inside the
groups~(\ref{v_pracovne_v_Minneapolis}) does not change the connectivity
of $\kappa$, each individual
${}^c\Psi_{j_1,\ldots,j_k}^{s_1,\ldots,s_l}$ is
well-defined. Formula~(\ref{dnes}) is obvious.
\end{proof}

In what follows we use the same symbols both for the
maps~(\ref{dnes}) and for their $R$-linear extensions. Therefore,
for instance, ${}^1\Psi_{j_1,\ldots,j_k}^{s_1,\ldots,s_l}$ will also
denote an $R$-linear map
\begin{equation}
  \label{Jaruscina_dymka}
{}^1\Psi_{j_1,\ldots,j_k}^{s_1,\ldots,s_l}:
\big[S^{j_1}(U'') \ot \cdots \ot
S^{j_k}(U'')\big] \hot R
\to \big[ S^{s_1}(U'') \ot \cdots \ot
S^{s_l}(U'')\big] \hot R.
\end{equation}

Let $g^m_n: S^n(U') \to S^m(U'')\hot R$ be,  for $m,n \geq 0$, the $\flicek {{\,
    n}}m$-components of $g$ and let  \[
f^m_n :
S^n(U'') \hot R \to S^m(U''') \hot R
\] 
be the $R$-linear extensions of
the $\flicek {{\, n}}m$-components of $f$. For given natural numbers
$k,l$ and \nni{}s $i_1,\ldots,i_k, t_1,\ldots,t_l$ 
we define an auxiliary map
\[
(f\diamond g)_{i_1,\ldots,i_k}^{t_1,\ldots,t_l}: S^{j_1}(U'') \ot \cdots \ot
S^{j_k}(U'')
\longrightarrow
\big[S^{s_1}(U'') \ot \cdots \ot S^{s_l}(U'')\big] \hot R
\]
as the sum
\begin{equation}
\label{Jeste_vcera)_jsem_byl_na_lyzickach.}
\sum \frac 1{k!\ l!}
(f_{s_1}^{t_1} \ot \cdots \ot f_{s_l}^{t_l}) 
\circ {}^1\Psi_{j_1,\ldots,j_k}^{s_1,\ldots,s_l}
\circ (g_{i_1}^{j_1} \ot \cdots \ot g_{i_k}^{j_k})
\end{equation}
over all \nni{}s $s_1,\ldots,s_l,j_1,\ldots,j_k $ such that $s_1 +
\cdots + s_l = j_1 + \cdots + j_k$, where
${}^1\Psi_{j_1,\ldots,j_k}^{s_1,\ldots,s_l}$ is the $R$-linear
extension~(\ref{Jaruscina_dymka}) of the connected component of the
map $\Psi_{j_1,\ldots,j_k}^{s_1,\ldots,s_l}$ in~(\ref{beh}).  In the
proof of Theorem~\ref{sculpture_garden} below we
write~\eqref{Jeste_vcera)_jsem_byl_na_lyzickach.} 
in the form
\begin{equation}
\label{mikulas}
\sum \frac 1{k!\ l!}\left(
\begin{array}{c}
f_{s_1}^{t_1} \ot \cdots \ot f_{s_l}^{t_l}
\\
{}^1\Psi_{j_1,\ldots,j_k}^{s_1,\ldots,s_l}
\\
g_{i_1}^{j_1} \ot \cdots \ot g_{i_k}^{j_k}
\end{array}
\right)
\end{equation}
which would enable us to squeeze formula~(\ref{Main}) into a line of
finite length.
Finally, for $m,n > 0$ define a $\bfk$-linear map 
\[
(f\diamond g)^m_n : S^n(U') \to S^m(U''')\hot R
\]
by the formula
\begin{align}
\label{hodinky}
(f\diamond g)^m_n(u'_1\!\odot \cdots \odot\! u'_n&):=
\\
\nonumber 
\sum\mu^{[l-1]} \epsilon(\sigma) &
(f\diamond g)_{i_1,\ldots,i_k}^{t_1,\ldots,t_l} (u'_{\sigma(1)}\! \odot \cdots \odot\! u'_{\sigma(i_1)}) \ot \cdots \ot 
(u'_{\sigma(n-i_k+1)}\! \odot  \cdots \odot\! u'_{\sigma(n)}),
\end{align}
where the summation runs over all
natural numbers $k,l$, \nni{}s 
$i_1,\ldots,i_k$, $t_1,\ldots,t_l$ such that 
\[
i_1 +
\cdots i_k = n \ \mbox { and }\
t_1 + \cdots + t_l = m,
\] 
and over all $(i_1,\ldots,i_k)$-unshuffles $\sigma \in
\unsh_{i_1,\ldots,i_k}$. In~(\ref{hodinky}),
\[
\mu^{[l-1]}
: \big[  S^{t_1}(U''') \ot \cdots \ot S^{t_l}(U''')\big] 
\hot R \longrightarrow
S^m(U''')\hot R
\]
is the $R$-linear extension of the 
multiplication in $S(U''')$ iterated $(l\!-\!1)$-times.

\begin{theorem}
\label{sculpture_garden}
The $\flicek {\, n}m$-part of the composition 
$f \diamond g \in \Lin^0_R\big(S(U'),S(U''')\big)$ 
is given by formula~(\ref{hodinky}).
\end{theorem}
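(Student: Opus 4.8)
The plan is to compute $\exp(f)\circ\exp(g)$ explicitly from the exponential formula~\eqref{exp}, to recognise its $\flicek{\,n}m$-components as~\eqref{hodinky} but with the \emph{full} map $\Psi^{s_1,\ldots,s_l}_{j_1,\ldots,j_k}$ in place of its connected part ${}^1\Psi^{s_1,\ldots,s_l}_{j_1,\ldots,j_k}$, and finally to see that forming $f\diamond g=\log\big(\exp(f)\circ\exp(g)\big)$ retains exactly the connected contribution. For the first step, I apply $\exp(g)$ to a monomial $u'_1\odot\cdots\odot u'_n$ through the unshuffle form~\eqref{Zitra_vylet_na_kole_podel_Missisipi} and decompose each factor $g(\cdots)$ according to its target degree; this gives, for each $k$, a sum over sizes $i_1+\cdots+i_k=n$, over $(i_1,\ldots,i_k)$-unshuffles $\sigma$, and over output degrees $j_1,\ldots,j_k$, of the product $g^{j_1}_{i_1}(\cdots)\cdots g^{j_k}_{i_k}(\cdots)\in S^{r}(U'')$, $r=j_1+\cdots+j_k$, weighted by $\epsilon(\sigma)/k!$. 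I then apply $\exp(f)$ to each such product via~\eqref{exp}: the iterated diagonal $\delta^{[l-1]}$ on $S^r(U'')$ redistributes the $r$ generators into $l$ groups of sizes $s_1,\ldots,s_l$, after which $f$ acts factorwise and $\mu^{[l-1]}$ multiplies the outputs in $S(U''')$. The decisive observation is that the composite $\delta^{[l-1]}\circ\mu^{[k-1]}$ sending $S^{j_1}(U'')\ot\cdots\ot S^{j_k}(U'')$ to $S^{s_1}(U'')\ot\cdots\ot S^{s_l}(U'')$---first forgetting the $g$-grouping by multiplying, then re-unshuffling into the $f$-grouping---is \emph{exactly} $\Psi^{s_1,\ldots,s_l}_{j_1,\ldots,j_k}$ of~\eqref{beh}, since in the unshuffle normalisation both equal the sum~\eqref{foudling} over $(s_1,\ldots,s_l)$-unshuffles, well-defined modulo rearrangements inside the groups~\eqref{v_pracovne_v_Minneapolis}. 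Reading off the $S^m(U''')$-part and collecting the two symmetry factors $1/k!$ and $1/l!$ yields for $\exp(f)\circ\exp(g)$ a formula identical to~\eqref{hodinky}--\eqref{mikulas} with the full $\Psi^{s_1,\ldots,s_l}_{j_1,\ldots,j_k}=\sum_{c\ge1}{}^c\Psi^{s_1,\ldots,s_l}_{j_1,\ldots,j_k}$ of~\eqref{dnes} in place of ${}^1\Psi^{s_1,\ldots,s_l}_{j_1,\ldots,j_k}$.

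Since $\exp(f)\circ\exp(g)=\exp(f\diamond g)$ by the definition of $\diamond$, it remains to verify that the map $h$ whose $\flicek{\,n}m$-components are given by~\eqref{hodinky} with the \emph{connected} ${}^1\Psi$ reproduces, upon forming $\exp(h)$ through~\eqref{exp}, the full-$\Psi$ formula of the previous paragraph. I would prove this by a connectivity bijection: each $\kappa$ contributing to $\Psi$ has $c(\kappa)$ connected components, and this component structure partitions the lower and upper vertices into $c(\kappa)$ blocks on which $\kappa$ restricts to connected unshuffles. Expanding $\exp(h)=\sfe+h+h^{\star 2}/2!+\cdots$ via~\eqref{Zitra_vylet_na_kole_podel_Missisipi} groups the $n$ inputs into $p$ blocks and applies one connected factor $h$ to each; these $p$ blocks are precisely the connected components, so summing over $p$ recovers the sum over all graphs. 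The outer unshuffles and the multiplications $\mu$ compose by the coherence of the symmetric (co)algebra structure and the Koszul signs $\epsilon$ multiply accordingly, so the two formulas agree provided the numerical weights match.

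The only genuine difficulty is this last weight-matching, and I regard it as the main obstacle. On the full-$\Psi$ side a graph with $p$ components carries the single factor $1/(k!\,l!)$, whereas on the $\exp(h)$ side the same configuration carries $\tfrac1{p!}\prod_{\alpha=1}^{p}\tfrac1{k_\alpha!\,l_\alpha!}$, where $(k_\alpha,l_\alpha)$ are the vertex counts of the components and $\sum_\alpha k_\alpha=k$, $\sum_\alpha l_\alpha=l$. I would settle this by exhibiting the decomposition as a bijection between $(i_1,\ldots,i_k)$-unshuffles $\sigma$ equipped with a graph $\kappa$ of connectivity $p$ and unordered $p$-tuples of connected data carrying their own local unshuffles; the multinomial count $\tfrac{k!}{\prod_\alpha k_\alpha!}\cdot\tfrac{l!}{\prod_\alpha l_\alpha!}$ of label distributions, divided by the $p!$ reordering of equal blocks, reconciles $1/(k!\,l!)$ with $\tfrac1{p!}\prod_\alpha 1/(k_\alpha!\,l_\alpha!)$. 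This is the standard ``$\log$ of all diagrams equals the sum of connected diagrams'' identity; once the factorials are matched, formula~\eqref{hodinky} for $f\diamond g$ follows at once.
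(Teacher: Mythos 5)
Your proposal is correct and follows essentially the same route as the paper: compute $\exp(f)\circ\exp(g)$ componentwise to obtain the formula with the full $\Psi^{s_1,\ldots,s_l}_{j_1,\ldots,j_k}$, then invoke the Feynman-diagram principle that the logarithm extracts connected components, reducing everything to the factorial bookkeeping between $\frac{1}{k!\,l!}$ and $\frac{1}{p!}\prod_\alpha\frac{1}{k_\alpha!\,l_\alpha!}$. Your multinomial weight-matching argument is exactly the check the paper describes as ``straightforward though tedious,'' spelled out in slightly more detail.
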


\begin{proof}
Our strategy will be to show that the exponential of the map whose
$\flicek {\, n}m$-parts are given by~(\ref{hodinky}) equals \hbox{$\exp(f)
\!\circ \exp(g)$}. 
Using~(\ref{Zitra_vylet_na_kole_podel_Missisipi}) we find
the following expression for the
$\flicek {{\, n}}m$-components of the composition $\exp(f)\! \circ\! \exp(g)$:
\begin{align*}
\big(\exp(f)\! \circ \!\exp(g)\big)^m_n(u'_1\!\odot \cdots \odot\! u'_n)&=
\\
\nonumber 
\sum\mu^{[l-1]}\epsilon(\sigma)\big(\exp(f) 
\circ \exp(g)&\big)_{i_1,\ldots,i_k}^{t_1,\ldots,t_l} 
(u'_{\sigma(1)}\! \odot \cdots \odot\! u'_{\sigma(i_1)})\! \ot \cdots \ot \!
(u'_{\sigma(n-i_k+1)}\! \odot  \cdots \odot\! u'_{\sigma(n)}),
\end{align*}
where 
\begin{equation}
\label{Fesacek}
\big(\exp(f) 
\circ \exp(g)\big)_{i_1,\ldots,i_k}^{t_1,\ldots,t_l} =:
\sum \frac 1{k!\ l!}\left(
\begin{array}{c}
f_{s_1}^{t_1} \ot \cdots \ot f_{s_l}^{t_l}
\\
\Psi_{j_1,\ldots,j_k}^{s_1,\ldots,s_l}
\\
g_{i_1}^{j_1} \ot \cdots \ot g_{i_k}^{j_k}
\end{array}\right)
\end{equation}
with the summation as in~\eqref{mikulas}. The crucial difference 
against~(\ref{mikulas}) is however that~(\ref{Fesacek})
involves the entire 
$\Psi_{j_1,\ldots,j_k}^{s_1,\ldots,s_l}$ not only its connected part.

Now the theorem follows from the principle standard in the theory of
Feynman diagrams\footnote{See e.g.~\cite[p.~119]{MR603127}.} that the logarithm
singles out connected components 
of graphs or, equivalently, that the exponential assembles graphs from
their connected components. 
Let us explain what this principle says in our case. 
Calculating the exponential
of $f \diamond g$ using formula~(\ref{hodinky}) involves, instead
of~(\ref{Fesacek}), expressions like
\begin{equation}
\label{Main}
\sum  \frac 1{u!k_1!\ l_1! \cdots k_u!l_u!}
\left(
\begin{array}{c}
\rule{0pt}{1em}
\redukce{$f_{s^1_1}^{t^1_1} \ot \cdots \ot f_{s^1_{l_1}}^{t^1_{l_1}}$}
\\
\rule{0pt}{1em}
\redukce{${}^1\Psi_{j^1_1,\ldots,j^1_{k_1}}^{s^1_1,\ldots,s^1_{l_1}}$}
\\
\rule{0pt}{1em}
\redukce{$g_{i^1_1}^{j^1_1} \ot \cdots \ot g_{i^1_{k_1}}^{j^1_{k_1}}$}
\end{array}
\right)
\ot \cdots \ot
\left(
\begin{array}{c}
\rule{0pt}{1em}
\redukce{$f_{s^u_1}^{t^u_1} \ot \cdots \ot f_{s^u_{l_u}}^{t^u_{l_u}}$}
\\
\rule{0pt}{1em}
\redukce{${}^1\Psi_{j^u_1,\ldots,j^u_{k_u}}^{s^u_1,\ldots,s^u_{l_u}}$}
\\
\rule{0pt}{1em}
\redukce{$g_{i^u_1}^{j^u_1} \ot \cdots \ot g_{i^u_{k_u}}^{j^u_{k_u}}$}
\end{array}
\right)
\end{equation}
with some $u \geq 1$, $l_1 + \cdots + l_u = l$, $k_1 + \cdots + k_u =
k$, and
\[
t^1_1+\cdots + t^1_{l_1} + \cdots +t^u_1+\cdots + t^u_{l_u} = m\
\hbox { and } \
i^1_1+\cdots + i^1_{k_1} + \cdots +i^u_1+\cdots + i^u_{k_u} = n.
\]

In fact, formula~(\ref{Main}) 
expresses the right hand side of~(\ref{Fesacek}) via
products of its
`connected' components. The seeming discrepancy between the
coefficients
\begin{equation}
\label{pozitri_koloquium}
\frac1{k!l!} \ \mbox { in~(\ref{Fesacek}) and}
\
\frac 1{u!k_1!\ l_1! \cdots k_u!l_u!}  \hbox{ in~(\ref{Main})} 
\end{equation}
is explained as follows. While in~(\ref{Fesacek}) the connected
components are entangled, in~(\ref{Main}) they are separated. However,
thanks to the commutativity of symmetric algebras, 
the entangled components can be separated
using suitable permutations on the output and input sides. It is
straightforward though tedious to check that, with the
coefficients~(\ref{pozitri_koloquium}),  the corresponding terms appear
with the same multiplicity.
\end{proof}

\begin{corollary}
\label{cor}
  $\IBL_\infty$-algebras with
  morphisms~\eqref{Jaruska_zitra_podepise_smlouvu} form a subcategory
  $\ttIBL_\infty$ of $\ttMV_{\bfk[[\hbar]]}$.
\end{corollary}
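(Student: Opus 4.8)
The plan is to check the two requirements that make $\ttIBL_\infty$ a subcategory of $\ttMV_{\bfk[[\hbar]]}$: that the identity of each object lies in $\ttIBL_\infty$, and that $\ttIBL_\infty$ is closed under the $\diamond$-composition. The $\ttMV$-condition $\Delta''' \circ \exp(f \diamond g) = \exp(f \diamond g) \circ \Delta'$ comes for free, as we already know $\ttMV_{\bfk[[\hbar]]}$ is a category; likewise $f \diamond g = \log(\exp(f)\circ\exp(g))$ is automatically $\bfk[[\hbar]]$-linear and hence of the shape~(\ref{Jaruska_zitra_podepise_smlouvu}). So the real content is that the support condition~(\ref{cl-cfl}), together with $(f\diamond g)^{(1)}(1)=0$, is preserved.

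First I would recast~(\ref{cl-cfl}) as an $\hbar$-divisibility statement about $\flicek{n}{m}$-components. Writing $g = \sum_{k\geq 1}\hbar^{k-1}g^{(k)}$, the inclusion $\bigoplus_{n>k}S^n(U') \subset \Ker(g^{(k)})$ says precisely that the coefficient of $\hbar^{k-1}$ in $g^m_n$ vanishes whenever $k<n$; equivalently, each component $g^m_n: S^n(U') \to S^m(U'')\hot R$ is divisible by $\hbar^{n-1}$, and similarly for $f^m_n$. The identity is immediate in this language: $\un_{S(U)} = \pi_1$ is $\hbar$-independent and supported on $S^1$, so $(\un_{S(U)})^m_n$ vanishes unless $n=m=1$, trivially meeting the bound, while $\pi_1(1)=0$.

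The heart of the matter is closure under $\diamond$, for which I would feed the composition formula~(\ref{hodinky}) of Theorem~\ref{sculpture_garden} into this divisibility criterion and count powers of $\hbar$. A typical term of the auxiliary map $(f\diamond g)_{i_1,\ldots,i_k}^{t_1,\ldots,t_l}$ is
\[
\frac{1}{k!\,l!}\,(f^{t_1}_{s_1} \ot \cdots \ot f^{t_l}_{s_l}) \circ {}^1\Psi^{s_1,\ldots,s_l}_{j_1,\ldots,j_k} \circ (g^{j_1}_{i_1} \ot \cdots \ot g^{j_k}_{i_k}),
\]
with $\sum_a i_a = n$ and $r := \sum_a j_a = \sum_b s_b$. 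Since ${}^1\Psi$ carries no $\hbar$, the divisibility of each factor forces the $\hbar$-valuation of the term to be at least $\sum_b(s_b-1) + \sum_a(i_a-1) = (r-l)+(n-k)$.

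It remains to show $(r-l)+(n-k) \geq n-1$, i.e.\ $r \geq k+l-1$, and this is exactly where the restriction to the connected part ${}^1\Psi$ in~(\ref{hodinky}) is indispensable. The bipartite graph $\Gamma$ attached to a connected unshuffle has $k+l$ vertices and, being connected, at least $k+l-1$ edges; on the other hand each of the $r$ transported generators contributes at most one edge, so the number of edges is at most $r$, whence $r \geq k+l-1$. Thus the valuation is at least $n-1$, so $(f\diamond g)^m_n$ is divisible by $\hbar^{n-1}$, which is~(\ref{cl-cfl}) for $f\diamond g$; and $(f\diamond g)^{(1)}(1)=0$ follows because $f\diamond g \in \Lin^0_R$ forces $(f\diamond g)(1) \in S(U''')\hot\frm$. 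The main obstacle is precisely this edge-counting inequality: had~(\ref{hodinky}) involved the full $\Psi$ instead of its connected part, disconnected graphs with $r$ as small as $\max(k,l)$ would violate $r \geq k+l-1$ and composition would fail to preserve~(\ref{cl-cfl}); it is Theorem~\ref{sculpture_garden} that rules these out.
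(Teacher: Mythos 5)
Your proof is correct and follows essentially the same route as the paper's: recast~(\ref{cl-cfl}) as $\hbar^{n-1}$-divisibility of the $\flicek{\,n}{m}$-components, feed Theorem~\ref{sculpture_garden} into an $\hbar$-valuation count, and reduce everything to the inequality $r \geq k+l-1$ for connected graphs (which the paper proves via Euler's theorem and $b_1(\Gamma)\geq 0$, and you prove via the equivalent spanning-tree bound). Your explicit check of the identity morphism and the observation that $(f\diamond g)^{(1)}(1)=0$ is forced by membership in $\Lin^0_R$ are small additions the paper leaves implicit, but the substance is the same.
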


\begin{proof}
It is simple to verify that a map as
in~\eqref{Jaruska_zitra_podepise_smlouvu} satisfies~\eqref{cl-cfl} 
if and only if its $\flicek {{\, n}}m$-component $f^m_n$ is divisible
by $\hbar^{n-1}$ for each $m \geq 0$ and $n \geq 1$, i.e.\ if
\[
f^m_n = \hbar^{n-1}\phi^m_n, \ m \geq 0,\ n \geq 1,
\]
for some $\phi^m_n : S^n(U') \to S^m(U'')[[\hbar]]$. 
Let therefore 
\[
f : S(U'')[[h]] \to  S(U''')[[h]]\ \mbox { and }
g : S(U')[[h]] \to  S(U'')[[h]]
\]
be such maps. We must prove that each $\flicek {{\, n}}m$-component
$(f\diamond g)^m_n$ of their
MV-composition~(\ref{Jaruska_mi_poslala_Mikulase}) with $n \geq 1$ is
divisible by
$\hbar^{n-1}$.  This clearly happens if the expression~(\ref{mikulas})
is, for $n =i_1 + \cdots + i_k$, divisible by $\hbar^{n-1}$.

Since each $f^{t_b}_{s_b}$ is divisible by $\hbar^{s_b-1}$ if $s_b
\geq 1$, $1 \leq b \leq l$, and each $g^{j_a}_{i_a}$ is divisible by
$\hbar^{i_a-1}$ if $i_a \geq 1$, $1 \leq a \leq k$ by
assumption,~(\ref{mikulas}) is divisible~by
\[
\hbar^{i_1 + \cdots + i_k + s_1 + \cdots +s_l - (\overline k+\overline
  l\,)} =
\hbar^{n + s_1 + \cdots +s_l- (\overline k+\overline l\,)},
\]
where $\overline k$ is the number of $a$'s for which $i_a \not=0$ 
(resp.~$\overline l$ the number of $b$'s for which $s_b
\not=0$). Thus~(\ref{mikulas}) is divisible by $\hbar^{n-1}$ if
\begin{equation}
\label{eq:za_tyden_do_Prahy}
n + s_1 + \cdots +s_l- (\overline k+\overline l\,) \geq n-1.
\end{equation}
Since $\overline k \leq k$ and $\overline l \leq
l$,~(\ref{eq:za_tyden_do_Prahy}) would follow from 
\[
n + s_1 + \cdots +s_l- (k+l) \geq n-1,
\]
which is the same as 
\[
s_1 + \cdots +s_l- (k+l) +1 \geq 0.
\]

Notice that~(\ref{mikulas}) is the sum over unshuffles $\kappa$ whose
associated graphs $\Gamma$ are connected. The number of vertices $V$
of such a graph
clearly equals $k+l$ while its number $E$ of edges is $s_1 + \cdots +s_l$.
We therefore need to prove that 
\begin{equation}
\label{JJR}
E-V+1 \geq 0.
\end{equation}
Since $\Gamma$ connected, $V-E = 1-b_1(\Gamma)$ by Euler's theorem or,
equivalently, $E-V +1 = b_1(\Gamma)$, where
$b_1(\Gamma)$ is the first Betti number of $\Gamma$. As
$b_1(\Gamma)\geq 0$,~(\ref{JJR})
immediately follows.
\end{proof}

\section{The Quantum Master Equation}
\label{sec:master-equation-1}

\begin{definition}
  Let $V$ be an \MV-algebra over $R$ with a maximal ideal $\frm$ as in
  Definition~\ref{Prvni_den_po_navratu_z_Minneapolis}.  The {\em
    quantum master equation (QME\/)\/} in $V$ is the equation
\begin{equation}
\label{eq:3}
\Delta e^S = 0,
\end{equation}
for a degree-$0$ element \hbox{$S \in V \hot\, \frm$}. We denote by
$\MC_V(R)$ the set of all solutions of the quantum master equation in $V$.
\end{definition}

In~(\ref{eq:3}), $e^S$ is the exponential in the graded commutative
associative algebra $V \hot\, R$. The existence of the exponential is
guaranteed by the completeness of $R$.

\begin{example}
[M\"unster-Sachs' formulation]
Let $A$ be an $\IBL_\infty$-algebra as in
Example~\ref{Treti_den_pekla}. M\"unster and Sachs consider $S \in
S(U)[[\hbar]]$ of the form
\begin{equation}
\label{Haifa}
S = \sum_{k \geq 2} \hbar^{k-1} c_k
\end{equation}
with some $c_k \in  \bigoplus_{1 \leq n \leq k} S^n(U)$.
\end{example}

\begin{remark}
  M\"unster and Sachs allowed $k=1$ in \eqref{Haifa}, which would be
  inaccurate in our setup, unless there were reasons to guarantee
  convergence of $e^S$ -- see the next example.
\end{remark}

\begin{example}[Formulation of \cite{sasha's}]
\label{previous}
Let $A$ be a $\BV_\infty$-algebra as in
Example~\ref{sec:mv-algebras-1}. Consider the functional field
\[
{\mathbb K} := \bfk(\!(\hbar)\!) = \bfk[[\hbar]][\hbar^{-1}].
\]
For an independent symbol $\lambda$, $\bfk[[\hbar]][[\lambda]] =
\bfk[[\hbar, \lambda]]$ is a complete local ring with residue field
$\bfk$ and ${\mathbb K}[[\lambda]]: = \bfk(\!(h)\!)[[\lambda]]$ is a
complete local ring with residue field~${\mathbb K}$. Extending
Remark~\ref{sec:mv-algebras-3}, we may consider $A$ as an
$\MV$-algebra over $\bfk[[\hbar, \lambda]]$ or over
$\bfk(\!(h)\!)[[\lambda]]$.

The work \cite{sasha's} treated the quantum master equation for $S$ of
the form $S = \tilde S/\hbar$, with some $\tilde S \in \lambda
A[[\hbar]][[\lambda]]$ or $\tilde S \in \lambda
A(\!(\hbar)\!)[[\lambda]]$. Here we work in our more general setup and
consider $\tilde S \in A(\!(\hbar)\!) \hot \, \frn$, where $\frn$ is
the maximal ideal in a complete local ring $\ring$ with residue field
$\bfk$.
\end{example}

\begin{theorem}[\cite{braun-lazarev:poisson}]
\label{sec:quant-mast-equat}
In the situation of Example~\ref{previous}, the equation $\Delta
e^{\tilde S/\hbar} = 0$ for $\tilde S \in A(\!(\hbar)\!) \hot \, \frn$
of degree zero is equivalent to
\[
\Delta \tilde S + \frac{1}{2!}  l_2(\tilde S,\tilde S) + \frac{1}{3!}
l_3(\tilde S,\tilde S,\tilde S) + \dots = 0,
\]
where $l_n$ for each $n \ge 2$ is the \emph{higher derived bracket\/}:
\begin{eqnarray*}
l_n (a_1, \dots, a_n) := 
 \frac{1}{\hbar^{n-1}}
\Phi_n^\Delta(a_1, \dots, a_n) 
 =  \sum_{k=1}^\infty \hbar^{k-1}
\Phi_n^{\Delta_{k+n-1}}(a_1, \dots, a_n),
\end{eqnarray*}
with $\Phi^?_n$ being defined in~$(\ref{psano_v_Haife})$.
\end{theorem}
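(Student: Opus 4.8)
The plan is to reduce the quantum master equation~\eqref{eq:3} for $S := \tilde S/\hbar$ to the vanishing of an explicit series in the operators $\Phi_n^\Delta$, by factoring $\Delta e^S$ as $e^S$ times such a series, and then to rewrite that series in terms of the higher derived brackets $l_n$.

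First I would establish the exponential identity
\[
\Delta(e^S) = e^S \cdot \sum_{n\ge 1} \frac 1{n!}\, \Phi_n^\Delta(S,\dots,S)
\]
for any degree-$0$ element $S \in A(\!(\hbar)\!)\hot\,\frn$. Since $(A,\mu)$ is associative and commutative, left multiplications satisfy $L_aL_b = L_{ab}$, so $L_{e^{\pm S}} = e^{\pm L_S}$; and since $S$ has degree zero, $L_S$ is even, whence by the adjoint-exponential identity $e^{-X}Y e^{X}=e^{-\ad_X}Y$,
\[
e^{-S}\,\Delta(e^S) = L_{e^{-S}}\,\Delta\, L_{e^S}(1) = \big(e^{-\ad_{L_S}}\Delta\big)(1).
\]
Because $\ad_{L_S}^n(\Delta) = (-1)^n[[\,\cdots[\Delta,L_S],\cdots],L_S]$, the two signs cancel, so the right-hand side equals $\sum_{n\ge 0}\frac 1{n!}[[\,\cdots[\Delta,L_S],\cdots],L_S](1)$. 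By~\eqref{psano_v_Haife} the $n$-fold bracket applied to $1$ is exactly $\Phi_n^\Delta(S,\dots,S)$, and the $n=0$ term vanishes as $\Phi_0^\Delta=0$; multiplying through by $e^S$ yields the identity. All the series converge because $S \in A(\!(\hbar)\!)\hot\,\frn$ is topologically nilpotent, so each application of $\ad_{L_S}$ raises the $\frn$-adic order.

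Since $e^S = 1 + (\text{topologically nilpotent})$ is invertible, the identity shows that $\Delta(e^S)=0$ is equivalent to $\sum_{n\ge 1}\frac 1{n!}\Phi_n^\Delta(S,\dots,S)=0$. Substituting $S=\tilde S/\hbar$ and using that $\Phi_n^\Delta$ is $n$-linear in its arguments, so $\Phi_n^\Delta(\tilde S/\hbar,\dots,\tilde S/\hbar) = \hbar^{-n}\Phi_n^\Delta(\tilde S,\dots,\tilde S)$, the equation becomes $\sum_{n\ge 1}\frac 1{n!}\hbar^{-n}\Phi_n^\Delta(\tilde S,\dots,\tilde S)=0$. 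Writing $\hbar^{-n}\Phi_n^\Delta = \hbar^{-1}l_n$ with $l_n := \hbar^{-(n-1)}\Phi_n^\Delta$ and clearing the overall $\hbar^{-1}$ gives precisely $\Delta\tilde S + \frac 1{2!}l_2(\tilde S,\tilde S) + \frac 1{3!}l_3(\tilde S,\tilde S,\tilde S) + \cdots = 0$, using $l_1 = \Phi_1^\Delta = \Delta$.

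It remains to justify the second expression for $l_n$ and, in particular, that $l_n$ has no negative powers of $\hbar$. Here I would use that $\Phi_n^\Delta$ is linear in $\Delta$, so the decomposition~\eqref{sec:mv-algebras-1bis}, $\Delta = \sum_{j\ge 1}\hbar^{j-1}\Delta_j$, gives $\Phi_n^\Delta = \sum_{j\ge 1}\hbar^{j-1}\Phi_n^{\Delta_j}$. Since each $\Delta_j$ is an order $\le j$ differential operator, its $(j+1)$-fold commutator with multiplication operators vanishes, so $\Phi_n^{\Delta_j}=0$ whenever $n>j$; hence only $j\ge n$ contributes, and reindexing $j=k+n-1$ gives $\Phi_n^\Delta = \sum_{k\ge 1}\hbar^{k+n-2}\Phi_n^{\Delta_{k+n-1}}$. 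Dividing by $\hbar^{n-1}$ produces $l_n = \sum_{k\ge 1}\hbar^{k-1}\Phi_n^{\Delta_{k+n-1}}$, whose lowest term is $\hbar^0\Phi_n^{\Delta_n}$, confirming that $l_n$ is a genuine map $A\to A[[\hbar]]$. The main obstacle is the first step: getting the adjoint expansion and its signs exactly right and securing $\frn$-adic convergence; once the exponential identity is in place, the remainder is bookkeeping in powers of $\hbar$.
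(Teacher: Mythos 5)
Your proof is correct and follows essentially the same route as the paper: both rest on the adjoint-exponential identity $\Ad_{e^X} = e^{\ad_X}$ applied to left multiplication by $\pm S$, evaluated at $1\in A$ to produce the series $\sum_{n\ge 1}\frac{1}{n!}\Phi_n^\Delta(S,\dots,S)$, followed by invertibility of $e^S$ and bookkeeping in powers of $\hbar$. Your extra verification that $l_n = \sum_{k\ge 1}\hbar^{k-1}\Phi_n^{\Delta_{k+n-1}}$ (via $\Phi_n^{\Delta_j}=0$ for $n>j$, since $\Delta_j$ has order $\le j$) is a welcome detail that the paper absorbs into the statement without proof, but it does not alter the underlying argument.
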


\begin{proof}
  Let $\Ad$ denote the adjoint action $\Ad_g Y := g Y g^{-1}$ of the
  group $\GL(A)$ of invertible, degree-zero linear maps $g: A \to A$
  on the Lie algebra $\gl(A):= \Lin_\bfk (A,A)$ of all degree-zero
  linear maps $Y: A \to A$. Let $\ad$ be the adjoint action $\ad_X Y
  := [X,Y]$ of $\gl(A)$ on itself. Then, for $X \in \gl(A) \hot \,
  \frn$, we have
\begin{equation}
\label{ad}
\Ad_{e^X} = e^{\ad_X}.
\end{equation}

Given a degree-zero element $\tilde S \in A(\!(\hbar)\!) \hot \,
\frn$, apply Equation~\eqref{ad} to the operator $X = L_{(-\tilde
  S/\hbar)}$ of left multiplication by $-\tilde S/ \hbar$. We get
\[
\Delta \circ e^{\tilde S/\hbar} = e^{\tilde S/\hbar} \sum_{n=0}^\infty
\frac{\ad_{(-\tilde S/\hbar)}^n}{n!} \, \Delta = e^{\tilde S/\hbar}
\sum_{n=0}^\infty \frac{[[\ \dots [\Delta, L_{\tilde S/\hbar}], 
\hbox {$. \hskip .05em  . \hskip .05em.$}],
  L_{\tilde S /\hbar} ]}{n!},
\]
where in the last formula there are exactly $n$ iterated commutators
with $L_{\tilde S/\hbar}$. Applying both sides of this equation to $1
\in A$ and using Equation~\eqref{psano_v_Haife}, we obtain 
\[
\Delta (e^{\tilde S/\hbar}) = \frac{e^{\tilde S/\hbar}}{\hbar} \left(\Delta
\tilde{S} + \frac{1}{2!} l_2(\tilde S,\tilde S) + \frac{1}{3!}
l_3(\tilde S,\tilde S,\tilde S) + \cdots\right),
\]
whence the result.
\end{proof}

\begin{theorem}
\label{sec:master-equation-2}
There is a natural one-to-one correspondence between solutions $S$ of
the QME~\eqref{eq:3} in $V$ and \MV-mor\-phisms $s \in \ttMV_R(\bfk,V)$,
i.e.\ one has a natural isomorphism
\begin{equation}
\label{v_Koline_o_Velikonocich}
\MC_V(R) \cong \ttMV_R(\bfk,V),
\end{equation}
given by $s(1) = S$. In~\eqref{v_Koline_o_Velikonocich}, the MV
structure on\/ $\bfk$ is defined by $\delta(1) := 1 \ot 1$ and $\Delta
= 0$, as in Example~\ref{sec:mv-algebras-2}.
\end{theorem}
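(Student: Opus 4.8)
The plan is to unravel both sides of \eqref{v_Koline_o_Velikonocich} into elementary data and observe that they coincide. First I would describe $\ttMV_R(\bfk,V)$ concretely. Since $\bfk\hot R = R$ is free of rank one over $R$, every $s\in\Lin_R(\bfk,V)$ is the $R$-linear extension of the assignment $1\mapsto s(1)=:S\in V\hot R$, so $s$ is determined by the single element $S$. Because the unit $\eta'$ of the MV-algebra $\bfk$ is the identity $\id_\bfk$, the membership $s\in\Lin_R^0(\bfk,V)$ reduces by \eqref{eq:1} to the one condition $\Im(s)\subset V\hot\frm$, i.e.\ $S\in V\hot\frm$; as $s$ has degree $0$, so does $S$. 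Thus $s\mapsto S$ is already a bijection between $\Lin_R^0(\bfk,V)$ and the degree-$0$ elements of $V\hot\frm$, before the morphism condition is imposed.

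Next I would match the two defining equations. By the MV-structure on $\bfk$ specified in Example~\ref{sec:mv-algebras-2}, the operator is $\Delta'=0$, so the morphism condition $\Delta\circ\exp(s)=\exp(s)\circ\Delta'$ collapses to $\Delta\circ\exp(s)=0$. The key computation is that $1\in\bfk$ is group-like, $\delta(1)=1\ot 1$, so Example~\ref{za_14_dni_Izrael} gives $\exp(s)(1)=e^{s(1)}=e^S$, the honest exponential in the graded commutative algebra $V\hot R$. (This can also be seen directly from $s^{\star n}(1)=S^n$ and $\sfe(1)=1$.) Since $\exp(s)$ is $R$-linear and $\bfk\hot R=R$ is generated over $R$ by $1$, the map $\exp(s)$ is completely determined by $\exp(s)(1)=e^S$; and because $\Delta$ is $R$-linear, the equation $\Delta\circ\exp(s)=0$ of $R$-linear maps is equivalent to its single instance $\Delta(e^S)=0$, which is precisely the QME \eqref{eq:3}. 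Hence the bijection $s\mapsto S$ restricts to the desired bijection $\ttMV_R(\bfk,V)\to\MC_V(R)$.

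Finally I would verify naturality in $V$. For an MV-morphism $g\in\ttMV_R(V,W)$ the induced map $\ttMV_R(\bfk,g)$ is post-composition $s\mapsto g\diamond g$, more precisely $s\mapsto g\diamond s$, and the associated solution is $S'=(g\diamond s)(1)$. Using $\exp(g\diamond s)=\exp(g)\circ\exp(s)$ together with the group-likeness of $1$ exactly as above, one obtains $e^{S'}=\exp(g\diamond s)(1)=\exp(g)\big(\exp(s)(1)\big)=\exp(g)(e^S)$, so that $S'=\log\big(\exp(g)(e^S)\big)$ in $W\hot R$. This identifies the induced action on solutions with the transfer along $g$, so the isomorphism \eqref{v_Koline_o_Velikonocich} is natural.

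I do not expect a genuine obstacle here; the argument is essentially bookkeeping built on Example~\ref{za_14_dni_Izrael} and the explicit structure of the MV-algebra $\bfk$. The one point that demands care is the reduction of an equation of $R$-linear maps to a single equation in $V\hot R$: one must invoke both the freeness of $\bfk\hot R$ over $R$ and the $R$-linearity of $\Delta$ to pass between $\Delta\circ\exp(s)=0$ and $\Delta(e^S)=0$. Everything else is formal.
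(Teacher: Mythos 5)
Your proof is correct and follows essentially the same route as the paper's: identify $\Lin_R^0(\bfk,V)$ with the degree-$0$ elements of $V\hot\,\frm$ via $s\mapsto s(1)$, use the group-likeness of $1\in\bfk$ and Example~\ref{za_14_dni_Izrael} to get $\exp(s)(1)=e^{S}$, and reduce the operator equation $\Delta\circ\exp(s)=0$ to the single equation $\Delta e^{S}=0$ by $R$-linearity. Your closing naturality check, which reproduces the push-forward formula $S\mapsto\log\big(\exp(g)(e^{S})\big)$ appearing later in the paper, is a harmless addition that the paper's own proof leaves implicit.
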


\begin{remark}[Geometric interpretation]
\label{representability}
  Using the geometric interpretation of \MV-mor\-phisms in
  Remark~\ref{prepsano_v_Tokyo}, we can interpret the proposition as
  stating that a solution of the QME is a family $X$ of \MV-manifolds
  over $B = \Spec R$, or rather a family of \MV-manifold structures on
  the trivial fiber bundle $X = \Spec V \htt B$. One can also view a
  solution as a~$B$-point $B \to X^*$ of the dual \MV-manifold.
\end{remark}

\begin{proof}[Proof of Theorem~\ref{sec:master-equation-2}]
  By definition, $\Lin_R^0(\bfk,V)$ consists of $R$-linear maps
\begin{equation}
\label{pes_steka}
s : \bfk\, \hot R \cong R \to V \hot R
\end{equation} 
such that $s(1) \in V \hot\, \frm$. Under the canonical isomorphism
\[
\Lin_R(\bfk,V) \cong V \hot R, \ s \ni \Lin_R(\bfk,V) \longmapsto
s(1) \in V \hot R,
\] 
maps with this property tautologically correspond to $V \hot\, \frm$.

A map $s$ in~(\ref{pes_steka}) is an \MV-mor\-phism if and only if
$\Delta\circ \exp(s)= 0$. Since $\Delta \circ \exp(s)$ is $R$-linear,
this happens if and only if $\Delta \exp(s)(1)= 0$. As $1 \in \bfk$ is
group-like,
\[
\Delta \exp(s)(1)= \Delta e^{s(1)} = \Delta e^S
\]
by Example~\ref{za_14_dni_Izrael}. 
\end{proof}

Let $V$ be an \MV-algebra over $\bfk[[\hbar]]$ and $\ring$ a complete
local ring with the maximal ideal~$\frn$ and residue field $\bfk$.
Extend the \MV\ structure on $V$ linearly, as in
Remark~\ref{sec:mv-algebras-3}, to an \MV\ structure on the same $V$
over $\bfk[[\hbar]] \hot\, \ring$ and $\bfk(\!(\hbar)\!) \hot\,
\ring$, regarded as complete local rings with residue fields $\bfk$
and ${\mathbb K} = \bfk(\!(\hbar)\!)$, respectively.

In the BV formalism of theoretical physics \cite{losev} and
applications to deformation theory \cite{iacono,kkp,terilla}, it is
important to consider the QME for elements $S$ of the form $S = \tilde
S/\hbar$ for $\tilde S \in V(\!(\hbar)\!) \hot\, \frn$ or $\tilde S
\in V[[\hbar]] \hot\, \frn$. We relate these types of solutions to
\MV-morphisms below.

\begin{corollary}
\begin{itemize}[leftmargin=1.8em,labelsep=0.4em,itemsep=.3em,rightmargin=1.8em]
\item[(i)] There is a natural one-to-one correspondence between
  solutions \hfill\break  
  $\tilde{S} \in V(\!(\hbar)\!) \hot\, \frn$ of the quantum master equation
\begin{equation}
\label{master-equation-1}
\Delta e^{\tilde{S}/\hbar} = 0
\end{equation}
and \MV-mor\-phisms $s \in \ttMV_{{\mathbb K} \hot\, \ring} ({\mathbb K},V)$,
i.e. one has a natural isomorphism
\begin{equation}
\label{v_Koline_o_Velikonocich-1}
\big\{\tilde{S} \in
V(\!(\hbar)\!) \hot\, \frn \, | \, \Delta e^{\tilde{S}/\hbar} = 0\big\} = 
\hbar \cdot \MC_{V(\!(\hbar)\!)} ({\mathbb K} \hot \ring) \xrightarrow{\sim} \ttMV_{{\mathbb K} \hot \, \ring}\big({\mathbb K},V(\!(\hbar)\!) \big),
\end{equation}
given by $s(1) = \tilde{S}/\hbar$.

\item[(ii)] There is a natural one-to-one correspondence between solutions
  $\tilde{S} \in V[[\hbar]] \hot\, \frn$ of the  quantum master equation
\begin{equation}
\label{master-equation-2}
\Delta e^{\tilde{S}/\hbar} = 0
\end{equation}
and a certain subset of \MV-mor\-phisms $s \in \ttMV_{{\mathbb K}
  \hot\, \ring} ({\mathbb K},V(\!(\hbar)\!) )$, namely one has a
natural isomorphism
\begin{equation}
\label{v_Koline_o_Velikonocich-2}
\big\{\tilde{S} \in
V[[\hbar]] \hot\, \frn \, | \, \Delta e^{\tilde{S}/\hbar} = 0\big\} \xrightarrow{\sim}
\ttMV_{{\mathbb K} \hot \, \ring}^s (\mathbb K,V(\!(\hbar)\!) ),
\end{equation}
given by $s(1) = \tilde{S}/\hbar$, where $\ttMV_{{\mathbb K} \hot \,
  \ring}^s (\mathbb K,V(\!(\hbar)\!) )$ denotes the set of
\MV-mor\-phisms $s$ whose value $s(1)$ at $1 \in {\mathbb K} \hot \,
\ring$ has at most a simple pole at~$\hbar = 0$.
\end{itemize}
\end{corollary}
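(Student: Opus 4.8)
The plan is to deduce both statements from Theorem~\ref{sec:master-equation-2} by a change of ground field followed by a rescaling by $\hbar$. The point is that, although the whole theory has been set up over the fixed field $\bfk$, nothing prevents us from running it over the field ${\mathbb K} = \bfk(\!(\hbar)\!)$, which is again of characteristic zero. Accordingly, I would first record that $V(\!(\hbar)\!) = V \hot {\mathbb K}$ is an \MV-algebra over ${\mathbb K}$ via the ${\mathbb K}$-linear extension of $(\mu,\delta,\Delta)$, and that, as already noted in the paragraph preceding the corollary, ${\mathbb K}\hot\ring$ is a complete local ring with residue field ${\mathbb K}$ and maximal ideal ${\mathbb K}\hot\frn$ (this uses that $\ring$ is complete local with residue field $\bfk$). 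The relevant \MV-structure on the residue field ${\mathbb K}$ is the one of Example~\ref{sec:mv-algebras-2}, namely $\delta(1) = 1\ot 1$ and $\Delta = 0$.

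With these identifications, Theorem~\ref{sec:master-equation-2} applied to the \MV-algebra $V(\!(\hbar)\!)$ over the ring ${\mathbb K}\hot\ring$ gives a natural bijection
\[
\MC_{V(\!(\hbar)\!)}({\mathbb K}\hot\ring) \cong \ttMV_{{\mathbb K}\hot\ring}\big({\mathbb K},V(\!(\hbar)\!)\big),
\]
sending a solution $S \in V(\!(\hbar)\!)\hot\,\frn$ of $\Delta e^S = 0$ to the morphism determined by $s(1) = S$. To obtain (i) I would then rescale by setting $S = \tilde S/\hbar$. Since $\hbar$ is invertible in ${\mathbb K}$, multiplication by $\hbar$ is a bijection of $V(\!(\hbar)\!)\hot\,\frn$ onto itself that carries the solution set of $\Delta e^S = 0$ onto $\hbar\cdot\MC_{V(\!(\hbar)\!)}({\mathbb K}\hot\ring)$, while the equation $\Delta e^{\tilde S/\hbar}=0$ is literally $\Delta e^{S}=0$. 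Composing the two bijections yields the isomorphism~\eqref{v_Koline_o_Velikonocich-1}, with $s(1)=\tilde S/\hbar$.

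For part (ii) I would simply restrict the bijection of (i) to the subset $V[[\hbar]]\hot\,\frn \subset V(\!(\hbar)\!)\hot\,\frn$ and read off the resulting condition on the morphism side. Writing $\tilde S = \sum_{k\ge 0}\hbar^k a_k$ with coefficients in $V\hot\,\frn$, the corresponding value is $s(1)=\tilde S/\hbar = \hbar^{-1}a_0 + a_1 + \hbar a_2 + \cdots$, which has at most a simple pole at $\hbar=0$; conversely, if $s(1)$ has at most a simple pole then $\hbar\, s(1) = \tilde S$ lies in $V[[\hbar]]\hot\,\frn$. Hence $\tilde S \in V[[\hbar]]\hot\,\frn$ corresponds precisely to membership in $\ttMV^s_{{\mathbb K}\hot\ring}\big({\mathbb K},V(\!(\hbar)\!)\big)$, which is~\eqref{v_Koline_o_Velikonocich-2}.

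The only genuinely nonroutine point is the change-of-field step: one must check that every ingredient feeding into Theorem~\ref{sec:master-equation-2} --- the convolution calculus of Section~\ref{sec:morphisms}, the convergence of $\exp$ and $\log$, and the group-likeness of $1$ used via Example~\ref{za_14_dni_Izrael} --- is insensitive to replacing $\bfk$ by ${\mathbb K}$, and that the local-ring bookkeeping for ${\mathbb K}\hot\ring$ with maximal ideal ${\mathbb K}\hot\frn$ is the one recorded before the statement. Once this is granted, the $1/\hbar$ rescaling in (i) and the simple-pole tracking in (ii) are immediate.
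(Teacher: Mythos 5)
Your proposal is correct and matches the paper's intended derivation: the paper states this corollary without a separate proof, relying precisely on the scalar-extension set-up of the preceding paragraph (the \MV-structure on $V(\!(\hbar)\!)$ over the complete local ring ${\mathbb K}\hot\,\ring$ with residue field ${\mathbb K}$ and maximal ideal ${\mathbb K}\hot\,\frn$), followed by Theorem~\ref{sec:master-equation-2} applied with source ${\mathbb K}$ and the rescaling $S=\tilde S/\hbar$, which is a bijection since $\hbar$ is invertible in ${\mathbb K}$. Your pole-order bookkeeping in part (ii), restricting the bijection of (i) to $V[[\hbar]]\hot\,\frn$ and identifying the image as $\ttMV^s_{{\mathbb K}\hot\,\ring}\big({\mathbb K},V(\!(\hbar)\!)\big)$, is exactly the intended reading.
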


\begin{remark}[Geometric interpretation]
  Now, as we have two local algebras, geometric interpretation becomes
  subtler. Let $D$ denote $\Spec \bfk[[\hbar]]$, often called the
  \emph{formal disk}, $\mathring{D} = \Spec \mathbb K$ be the deleted
  formal disk, and $C = \Spec T$. An \MV-algebra over $\bfk[[\hbar]]$
  has been interpreted as an \MV-manifold over $D$, which may be
  restricted to an \MV-manifold over $\mathring{D}$. Let us denote it $X_0
  \to \mathring{D}$. Then the
  isomorphism~\eqref{v_Koline_o_Velikonocich-1} interprets a solution
  of the modified QME~\eqref{master-equation-1} as a family of
  \MV-manifolds extending the family $X_0 \to \mathring{D}$ to the base
  $\mathring{D} \htt C$:
\[
\xymatrix{
X_0 \ar[r] \ar[d] & X \ar[d]\\
\mathring{D}  \ar[r] & \mathring{D} \htt C.
}
\]
In the dual interpretation, the
isomorphism~\eqref{v_Koline_o_Velikonocich-1} asserts that the dual
\MV-manifold $X_0^* \to \mathring{D}$ is an \MV-manifold representing
the functor solutions of QME~\eqref{master-equation-1} or the ``moduli
\MV-space'' of solutions of the QME. Indeed, a solution of
\eqref{master-equation-1} may be interpreted as an \MV-morphism
\[
\xymatrix{
\mathring{D} \htt C \ar[r] \ar[dr] & X_0^* \ar[d]\\
& \mathring{D}.
}
\]
\end{remark}

Theorem~\ref{sec:master-equation-2} treats solutions of the
QME~\eqref{eq:3} as morphisms in the category of $\MV$-algebras over
$R$. Thus, composition of \MV-morphisms can be used to transfer these
solutions over $\MV$-mor\-phisms. More specifically, let $f \in
\MV_R(V',V'')$ be a morphism and $S \in V' \hot\, \frm$ a solution of
the QME in $V'$.  Theorem~\ref{sec:master-equation-2} translates $S$
into a morphism $s \in \MV_R(\bfk,V')$. Then $f \diamond s$ will be a
morphism in $\MV_R(\bfk,V'')$ therefore, by
Theorem~\ref{sec:master-equation-2} again, $(f \diamond s)(1)$ will
solve the QME in $V''$. Since $1\in \bfk$ is group-like, one moreover
has
\[
(f \diamond s)(1) = \log\big((\exp(f) \circ \exp(s))(1) \big) =
\log\big(\exp(f) (e^{s(1)})\big) =\log\big(\exp(f) (e^S)\big),
\]
the `actual' logarithm of $\exp(f) (e^S) \in V'' \hot \frm$.

\begin{definition}
The element 
\[
f_!(S) : = \log\big(\exp(f) (e^S)\big) \in \MC_{V''}(R)
\]
is the {\em push-forward\/} of the solution $S \in \MC_{V'}(R)$.
\end{definition}

\begin{theorem}[Cf.\ \cite{sasha's} for the case $V' = S(U')$]
\label{BV_transfer}
Assume that $V'$ is a bialgebra, i.e.\ the standard compatibility
between the multiplication and the comultiplication is fulfilled. Suppose
moreover that $S \in P(V') \hot \frm$, where $P(V')$ is the subspace of
primitive elements in $V'$. Then
\[
f_!(S) = f(e^S).
\]
\end{theorem}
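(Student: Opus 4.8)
The plan is to reduce the claim to the group-like formula of Example~\ref{za_14_dni_Izrael}. The key observation is that, under the two hypotheses, the exponential $e^S \in V' \hot R$ is a \emph{group-like} element in the $R$-linear sense, i.e.\ $\delta'(e^S) = e^S \ot_R e^S$. Granting this, Example~\ref{za_14_dni_Izrael} applies to $v = e^S$ and gives $\exp(f)(e^S) = e^{f(e^S)}$, the actual exponential in $V'' \hot R$. Since $e^S \equiv 1 \mod \frm$ and $f \in \Lin^0_R(V',V'')$, one has $f(e^S) \in V'' \hot \frm$, so the actual logarithm is defined and inverts the actual exponential; hence
\[
f_!(S) = \log\big(\exp(f)(e^S)\big) = \log\big(e^{f(e^S)}\big) = f(e^S),
\]
which is exactly the assertion.

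It remains to verify the group-like property of $e^S$, and this is where both hypotheses enter. First I would use that $V'$ is a bialgebra, so that $\delta'$ is an algebra morphism; together with the continuity of its $R$-linear extension this yields $\delta'(e^S) = e^{\delta'(S)}$. Next, the primitivity $S \in P(V') \hot \frm$ means $\bardelta'(S) = 0$, that is $\delta'(S) = S \ot 1 + 1 \ot S$. Finally, because $S$ is of degree zero, the summands $S \ot 1$ and $1 \ot S$ commute in $(V' \ot V')\hot R$ with trivial Koszul sign, so the exponential splits:
\[
e^{S \ot 1 + 1 \ot S} = e^{S \ot 1}\cdot e^{1 \ot S} = (e^S \ot 1)(1 \ot e^S) = e^S \ot e^S.
\]
Combining the two displays gives $\delta'(e^S) = e^S \ot_R e^S$, as required.

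The only step requiring genuine care is this last splitting of $e^{S\ot 1 + 1 \ot S}$: one must confirm that the degree-zero assumption on $S$ trivializes all Koszul signs and that the infinite sums are compatible with the completed tensor product and with the $R$-linear extension of $\delta'$. Everything else follows formally from the bialgebra axiom, from primitivity, and from the previously established Example~\ref{za_14_dni_Izrael}, so I do not anticipate any substantive obstacle beyond this bookkeeping.
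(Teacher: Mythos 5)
Your proof is correct and takes essentially the same route as the paper's: both show that $e^S$ is group-like by combining the bialgebra property ($\delta'(e^S)=e^{\delta'(S)}$), primitivity of $S$, and the commutation of $S \ot_R 1$ with $1 \ot_R S$, and then conclude via Example~\ref{za_14_dni_Izrael}. Your extra bookkeeping --- checking that $f(e^S) \in V'' \hot\, \frm$ so the actual logarithm converges and inverts the exponential --- is a detail the paper leaves implicit, but it is not a difference in approach.
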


\begin{proof}
  Observe that $e^S$ is a group-like element in $V' \hot R$. Indeed,
\[
\delta'(e^S) = e^{\delta'(S)} = e^{(S \ot_R 1 + 1 \ot_R S)} = 
  e^{S \ot_R 1}e^{ 1 \ot_R S} = e^S \ot_R e^S,
\]
where we used the fact that $\delta'$ is an algebra morphism and that
$S \ot_R 1$ commutes with $ 1 \ot_R S$ in $(V' \hot R)\ot_R(V' \hot
R)$.  The rest follows from the observations in
Example~\ref{za_14_dni_Izrael}.
\end{proof}

\def\cprime{$'$}\def\cprime{$'$}

\end{document}